\newtheorem{theorem}{Theorem}[section]
\newtheorem*{theoremA}{Theorem A}
\newtheorem{lemma}[theorem]{Lemma}
\newtheorem{prop}[theorem]{Proposition}
\newtheorem{cor}[theorem]{Corollary}
\theoremstyle{definition}
\theoremstyle{remark}
\newtheorem{remark}[theorem]{Remark}
\numberwithin{equation}{section}
\let \la=\lambda
\let \e=\varepsilon
\let \d=\delta
\let \o=\omega
\let \a=\alpha
\let \g=\gamma
\let \si=\sigma
\newcommand{\W}{\mathcal{W}}
\begin{document}

\title[On weighted norm inequalities]
{On weighted norm inequalities for the Carleson and Walsh-Carleson operators}

\author{Francesco Di Plinio}
\address{Dipartimento di Matematica,    Universit\`a   di Roma ``Tor Vergata'' \newline  \indent Via della Ricerca Scientifica,   00133 Roma,  Italy \newline \indent   \centerline{and}   \indent
  The Institute for Scientific Computing and Applied Mathematics, \newline \indent
 Indiana University
\newline\indent
831 East Third Street, Bloomington, Indiana  47405, U.S.A. }
\email{diplinio@mat.uniroma2.it {\rm (F.\ Di Plinio)} }

\author{Andrei K. Lerner}
\address{Department of Mathematics, Bar-Ilan University,\newline \indent
 5290002 Ramat Gan, Israel}
	\email{aklerner@netvision.net.il {\rm (A.\ K.\ Lerner)}}

\thanks{The first  author is an INdAM - Cofund Marie Curie Fellow and is  partially
supported by the National Science Foundation under the grant
   NSF-DMS-1206438, and by the Research Fund of Indiana University. The second author was supported by the Israel Science Foundation (grant No. 953/13).}

\begin{abstract}
We prove $L^p(w)$ bounds for the Carleson operator ${\mathcal C}$, its lacunary version $\mathcal C_{lac}$, and its analogue for the Walsh series $\W$ in terms
of the $A_q$ constants $[w]_{A_q}$ for $1\le q\le p$. In particular,
we show that, exactly as for the Hilbert transform, $\|{\mathcal C}\|_{L^p(w)}$ is
bounded linearly by $[w]_{A_q}$ for $1\le q<p$.
 We also obtain $L^p(w)$ bounds in terms of $[w]_{A_p}$, whose sharpness is related to  certain conjectures (for instance, of Konyagin \cite{K2}) on pointwise convergence of Fourier series for functions near $L^1$.

Our approach works in the general
context of maximally modulated Calder\'on-Zygmund operators.
\end{abstract}

\keywords{Carleson operator, modulated singular integrals, sharp weighted bounds.}

\subjclass[2010]{42B20,42B25}

\maketitle

\section{Introduction}
For $f\in L^p({\mathbb R}), 1<p<\infty$, define the Carleson operator ${\mathcal C}$ by
\begin{equation}{\mathcal C}(f)(x)=\sup_{\xi\in {\mathbb R}}|H({\mathcal M}^{\xi}f)(x)|,\label{Carl}\end{equation}
where $H$ is the Hilbert transform, and ${\mathcal M}^{\xi}f(x)={\rm e}^{2\pi i\xi x}f(x)$.

The  celebrated Carleson-Hunt theorem on a.e.\ convergence of Fourier series in one of its equivalent
statements says that ${\mathcal C}$ is bounded on $L^p$ for any $1<p<\infty$. The crucial step was done
by Carleson \cite{C} who established that ${\mathcal C}$ maps $L^2$ into weak-$L^2$. After that Hunt \cite{H}
extended this result to any $1<p<\infty$. Alternative proofs of this theorem were obtained by Fefferman \cite{F} and by
Lacey-Thiele \cite{LT}. We refer also to \cite{Ar}, \cite[Ch.\ 11]{G} and \cite[Ch.\ 7]{MS}.

By a weight we mean a non-negative locally integrable function.
The weighted boundedness of ${\mathcal C}$   is also well known. Hunt-Young \cite{H} showed that ${\mathcal C}$ is bounded
on $L^p(w), 1<p<\infty,$ if $w$ satisfies the $A_p$ condition (see also \cite[p. 475]{G}). In \cite{GMS},
Grafakos-Martell-Soria extended this result to a more general class of maximally modulated singular integrals.
A variation  norm strengthening of the Hunt-Young result   was recently obtained by
Do-Lacey~\cite{DL}, relying on an adaptation of the phase plane analysis of \cite{LT} to the weighted setting.

In the past decade a great deal of attention was devoted to sharp $L^p(w)$ estimates for singular integral operators in terms of the $A_p$ constants $[w]_{A_p}$.
Recall that these constants are defined as follows:
$$[w]_{A_p}=\sup_{Q}\left(\frac{1}{|Q|}\int_Qwdx\right)\left(\frac{1}{|Q|}\int_Qw^{-\frac{1}{p-1}}dx\right)^{p-1},\quad (1<p<\infty),$$
and
$$[w]_{A_1}=\sup_{Q}\left(\frac{1}{|Q|}\int_Qwdx\right)(\inf_Qw)^{-1},$$
where the supremum is taken over all cubes $Q\subset {\mathbb R}^n$.
Sharp bounds for $L^p(w)$ operator norms in terms of $[w]_{A_p}$ have been recently found
for many central operators in Harmonic Analysis (see, e.g., \cite{B,CMP,Hyt1,L2,L3,P}).
A relatively simple approach to such bounds based on local mean oscillation estimates was developed in \cite{CMP,Hyt2,L1,L2,L3}. For the sake of comparison with the maximally modulated case treated in this article, we briefly review the relevant definitions and results. A Calder\'on-Zygmund operator on ${\mathbb R}^n$ is an $L^2$ bounded integral operator represented as
$$Tf(x)=\int_{{\mathbb R}^n}K(x,y)f(y)dy,\quad x\not\in\text{supp}\,f,$$
with kernel $K$ satisfying the following growth and smoothness conditions:
\begin{enumerate}
\renewcommand{\labelenumi}{(\roman{enumi})}
\item
$|K(x,y)|\le \frac{c}{|x-y|^n}$ for all $x\not=y$;
\item
$|K(x,y)-K(x',y)|+|K(y,x)-K(y,x')|\le
c\frac{|x-x'|^{\d}}{|x-y|^{n+\d}}$
for some $0<\d\le 1$ when $|x-x'|<|x-y|/2$.
\end{enumerate}
The sharp weighted bounds for standard Calder\'on-Zygmund operators as above can then be formulated as follows.
\begin{theoremA} Let $T$ be a Calder\'on-Zygmund operator on ${\mathbb R}^n$.
\begin{enumerate}
\renewcommand{\labelenumi}{(\roman{enumi})}
\item For any $1\le q<p<\infty$,
$$\|T\|_{L^p(w)}\le c(n,T,q,p)[w]_{A_q},$$
and in the case $q=1$, $c(n,T,1,p)=c(n,T)pp'$;
\item for any $1<p<\infty$,
$$\|T\|_{L^p(w)}\le c(n,T,p)[w]_{A_p}^{\max\big(1,\frac{1}{p-1}\big)}.$$
\end{enumerate}
\end{theoremA}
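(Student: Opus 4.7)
The plan is to prove Theorem A by the now-standard route of pointwise sparse domination, following the local mean oscillation approach of \cite{CMP,Hyt2,L1,L2,L3}. First, I would establish a quantitative smoothness estimate: for every cube $Q$ and some fixed small $\la \in (0,1)$,
$$
\o_\la(Tf;Q) \le c(n,T)\, \frac{1}{|3Q|}\int_{3Q}|f|\,dy,
$$
where $\o_\la(g;Q)$ denotes the local mean oscillation. This is a variant of the Coifman--Fefferman inequality and follows from the $L^2$-boundedness of $T$ together with the H\"ormander-type regularity~(ii) of $K$, applied to $f\chi_{3Q}$ and $f\chi_{{\mathbb R}^n\setminus 3Q}$ separately (the smooth part uses the kernel cancellation). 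Plugging this into Lerner's local oscillation decomposition of $Tf$ on an ambient cube $Q_0$ and letting $Q_0$ exhaust ${\mathbb R}^n$ (the median $m_{Tf}(Q_0)$ vanishes since $Tf \in L^{p,\infty}$ in the weighted setting), I would obtain, for a sparse family $\mathcal{S}$ depending on $f$,
$$
|Tf(x)|\le c(n,T)\sum_{Q\in\mathcal{S}}\Big(\tfrac{1}{|3Q|}\textstyle\int_{3Q}|f|\,dy\Big)\chi_Q(x).
$$

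The weighted bounds of Theorem A then reduce to the same bounds for the positive sparse operator on the right, which I denote $\mathcal{A}_{\mathcal{S}}f$. For part (ii), I would exploit the duality symmetry $[\sigma]_{A_{p'}}=[w]_{A_p}^{1/(p-1)}$ with $\sigma=w^{1-p'}$, together with the self-duality of sparse operators, to reduce to the range $p\ge 2$, where a direct testing-and-Carleson-embedding argument against $\sigma$-charges gives the linear bound $\|\mathcal{A}_\mathcal{S}\|_{L^p(w)}\le c(n,p)[w]_{A_p}$, yielding the claimed exponent $\max(1,\tfrac{1}{p-1})$ across all $p$. For part (i) with $1<q<p$, the same type of testing argument (now against $\sigma_q=w^{1-q'}$) delivers $\|\mathcal{A}_\mathcal{S}\|_{L^p(w)}\le c(n,p,q)[w]_{A_q}$, as the $A_q$ condition provides the relevant Carleson packing estimate for the stopping cubes.

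The $q=1$ endpoint of (i), with its delicate linear dependence and explicit factor $pp'$, is where the main obstacle lies. One cannot simply interpolate: the natural strong-type interpolation from the sharp weak $(1,1)$ endpoint produces an extra logarithmic factor. Instead, I would adapt the argument of Lerner--Ombrosi--P\'erez, exploiting the $A_1$ relation $\langle |f|\rangle_Q\le[w]_{A_1}\inf_Q \mathcal{M}_w f$ together with the sparse structure, in order to pair, for a dual $g\in L^{p'}(w)$, the sum $\sum_{Q\in\mathcal{S}} \langle|f|\rangle_{3Q}\langle g\rangle_Q|Q|$ with a Carleson sequence; the $pp'$ factor appears as the operator norm of $\mathcal{M}$ on $L^p$ via Riesz--Thorin or equivalently from the sharp Yano-type bound for $\mathcal{M}$.

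In short, the technical core of the argument is the sparse domination step, but the genuine difficulty is calibrating the combinatorics of the sparse family against the $A_q$ condition so that the constants in (i)--(ii) are sharp, and in particular so that no spurious logarithm appears at $q=1$.
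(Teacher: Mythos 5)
Your skeleton is essentially the paper's: Theorem A is recovered there as the $\psi\equiv1$ case of Theorem \ref{mainr}, i.e.\ pointwise/norm domination by positive sparse operators obtained from the local mean oscillation decomposition (Theorem \ref{lmoes} together with Lemma \ref{oscest} and Lemma \ref{kest}), then the bound \eqref{cmp} of \cite{CMP} for part (ii), and an adaptation of the Lerner--Ombrosi--P\'erez scheme for the $q=1$ endpoint of (i). The one step of yours that would fail as described is the claim that for $1<q<p$ ``the same type of testing argument (now against $\sigma_q=w^{1-q'}$)'' delivers the linear $[w]_{A_q}$ bound. For $p\ge 2$ this case is not an issue, since (i) follows from (ii) because $[w]_{A_p}\le[w]_{A_q}$; but in the only nontrivial range $1<p<2$ the natural implementation loses a power: pairing the sparse sum with $g\in L^{p'}(w)$ and using sparseness gives $\sum_{Q}\langle f\rangle_Q\langle gw\rangle_Q|Q|\lesssim\int Mf\,M(gw)\,dx$, and while $\|Mf\|_{L^p(w)}\lesssim c(p,q)[w]_{A_q}^{1/p}\|f\|_{L^p(w)}$, the dual factor needs $M$ bounded on $L^{p'}(\sigma)$ with $\sigma=w^{1-p'}$; since $\sigma$ is in general in no better class than $A_{p'}$ (already for power weights $w\in A_1$), Buckley only yields $[\sigma]_{A_{p'}}^{p-1}\le[w]_{A_q}$, for a total of $[w]_{A_q}^{1+1/p}$. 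There is no obvious ``Carleson packing from $A_q$'' that repairs this. The paper sidesteps the problem completely: the case $1<q<p$ is deduced from $q=1$ by Duoandikoetxea's sharp extrapolation \cite[Cor.\ 4.3]{D}, and you should do the same unless you supply a genuinely new direct argument.

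A smaller inaccuracy: at $q=1$ the factor $pp'$ does not arise as ``the operator norm of $\mathcal M$ on $L^p$''. In the LOP-type argument that the paper adapts, the essential ingredients are the sharp reverse H\"older property $M_{s_w}w\le 2[w]_{A_1}w$ with $s_w=1+\frac{1}{2^{n+1}[w]_{A_1}}$, the dual-weight maximal estimate $\|Mh\|_{L^{p'}((M_sw)^{-1/(p-1)})}\lesssim p\,(\frac{1}{s-1})^{1-1/ps}\|h\|_{L^{p'}(w^{-1/(p-1)})}$ (Proposition \ref{sum}), and a Coifman-type comparison of the adjoint of the (linearized) sparse operator with $M$; this is the content you would actually have to reproduce when you ``adapt Lerner--Ombrosi--P\'erez'', and it is exactly where the absence of a spurious logarithm is decided. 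Finally, the vanishing of the median $m_{Tf}(Q_0)$ should be justified by the unweighted weak $(1,1)$ bound on a dense class (or by requiring $Tf\in S_0$, as the paper does), not by an a priori weighted $L^{p,\infty}$ membership. With these repairs your proposal coincides with the paper's proof.
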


Part (i) for $q=1$ was obtained by Lerner-Ombrosi-Perez \cite{LOP1,LOP2}, and later Duoandikoetxea \cite{D} showed that the result for $q=1$ can be
self-improved by extrapolation to any $1<q<p$. The sharp dependence of $c(n,T,1,p)$ on $p$ is important for a weighted weak-$L^1$ bound of $T$ in terms
of $[w]_{A_1}$ \cite{LOP2}.
Part (ii) (known as the $A_2$ conjecture) is a more difficult result. First it was proved by
Petermichl \cite{P} for the Hilbert transform, and recently Hyt\"onen \cite{Hyt1} obtained (ii) for general Calder\'on-Zygmund operators.
A proof of Theorem A based on local mean oscillation estimates was found in \cite{L3,L4}. Observe that
for $p\ge 2$, (i) follows from (ii) but for $1<p<2$, (i) and (ii) are independent results.

In this article we apply the ``local mean oscillation estimate" approach of \cite{L3,L4} to
the Carleson operator ${\mathcal C}$ of \eqref{Carl} and its lacunary version  ${\mathcal C}_{lac}$, defined as
\begin{equation}{\mathcal C}_{lac}(f)(x)=\sup_{\xi\in \Xi
}|H({\mathcal M}^{\xi}f)(x)|,\label{Carllac}\end{equation}
where $\Xi\subset \mathbb R $ is any fixed $\theta$-lacunary set, in the sense that
$$
\inf_{\xi\neq \xi' \in \Xi} \frac{|\xi-\xi'|}{|\xi|} = 1-\textstyle \frac{1}{\theta} >0;
$$
for instance, one can take $\Xi=\{\pm\theta^k: k \in \mathbb Z\}$.
Our results involving ${\mathcal C},{\mathcal C}_{lac}$ will be derived as corollaries of the   more general Theorem \ref{mainr}, which is formulated in the framework of the maximally modulated singular integrals
studied by Grafakos-Martell-Soria \cite{GMS}; precise definitions follow. We remark that, unlike the results of \cite{DL,HY,GMS}, our focus is on the (possibly) sharp dependence of the $L^p(w)$ operator norms in terms of the $A_p$ constants of the weight.

Let ${\mathcal F}=\{\phi_{\a}\}_{\a\in A}$ be a family of real-valued measurable functions indexed by some set $A$, and let $T$ be a Calder\'on-Zygmund
operator. Then the maximally modulated Calder\'on-Zygmund operator $T^{\mathcal F}$ is defined by
$$T^{\mathcal F}f(x)=\sup_{\a\in A}|T({\mathcal M}^{\phi_{\a}}f)(x)|,$$
where ${\mathcal M}^{\phi_{\a}}f(x)={\rm e}^{2\pi i\phi_{\a}(x)}f(x)$.

The article  \cite{GMS} develops the weighted theory of such operators  under the \emph{a priori} assumption of a family of restricted weak-type $L^p$ bounds (see \eqref{restr} below) with controlled dependence of the constants when $p\to 1^+$, which, via a generalization of   the  approximation procedures from \cite{A,SS},      entails unrestricted boundedness $ T^{\mathcal F}: X \to L^{1,\infty}$ on appropriate (local) Orlicz spaces $X$ near $L^1$.  Such consequence of the \emph{a priori} assumption is, in fact, what is actually used in the derivation of weighted bounds for $T^{\mathcal F}$. The authors of \cite{GMS} do not explicitly state their estimates in terms of the $A_p$ characteristic of the weight; however,  an inspection of their proofs   shows that, loosely speaking, the  $L^p(w)$ constant  will not depend sharply on    $[w]_{A_p}$ unless the Orlicz space $X$ such that $ T^{\mathcal F}: X \to L^{1,\infty}$ is (essentially) the best possible.

When $T^{\mathcal F}=\mathcal C$ (resp.\ $T^{\mathcal F}=\mathcal C_{lac}$), this is the same as  the  largest  (in a suitable sense, see \cite{CMR}) Orlicz subspace $X \hookrightarrow L^1(\mathbb T) $ (resp.\ $X_{lac}$ from now on)  of pointwise convergence of  Fourier partial sums  (resp.\ of pointwise convergence along lacunary subsequences).
The best known result for $\mathcal C$, due to Antonov~\cite{A}, is that  $  L\log L\log\log\log L(\mathbb T) \hookrightarrow X$; this is also recovered in \cite{SS,GMS,LIE2}. In the lacunary case, the current best result (see \cite{LIE} and \cite{DP})  is that
 \begin{equation} \label{lacemb}  L\log\log L\log\log\log\log L(\mathbb T) \hookrightarrow X_{lac}.
\end{equation}
  The present results strongly suggest that the improvements   \begin{equation} \label{theconjemb}  L \log L (\mathbb T) \hookrightarrow X, \qquad  L\log\log L (\mathbb T) \hookrightarrow X_{lac} \end{equation} actually hold. The second embedding in \eqref{theconjemb}  (which would be sharp)  is explicitly conjectured by Konyagin in \cite{K2}, while the first is a widespread conjecture: see \cite{LIE2,DP2} for some recent articles on the subject.

Inspired by the recent approach to the study of the $p=1$ endpoint behavior of the operators $\mathcal C,\mathcal C_{lac}$ via (unrestricted) weak-type $L^p$ bounds of \cite{DP,DP2},   we choose to work under the (formally) stronger  \emph{a priori} assumption   of    the   family of weak-type inequalities
\begin{equation}\label{cond1}
\|T^{\mathcal F}(f)\|_{L^{p,\infty}}\lesssim \psi(p')\|f\|_{p}, \qquad (1<p\leq 2)
\end{equation}
where $\psi$ is a non-decreasing function on $[1,\infty)$. The key novelty compared to the previous approach relying on Orlicz bounds near $L^1$ is that sharp $[w]_{A_p}$ dependence is now related to sharp weak-$L^p$ bounds for $T^{\mathcal F}$, which are not affected by   losses introduced by the log-convexity of $L^{1,\infty}$ \cite{KAL}. These losses are, in fact,  the culprit for the triple and quadruple log in the current best results towards \eqref{theconjemb}.

Our  main result is the following: for convenience, we
denote by $S_0({\mathbb R}^n)$ the class of measurable functions on ${\mathbb R}^n$ such that
$$\mu_{f}(\la)=|\{x\in {\mathbb R}^n:|f(x)|>\la\}|<\infty\qquad \forall \lambda >0 .$$
\begin{theorem}\label{mainr} Let $T^{\mathcal F}$ be a maximally modulated Calder\'on-Zygmund operator
satisfying \emph{(\ref{cond1})}.
\begin{enumerate}
\renewcommand{\labelenumi}{(\roman{enumi})}
\item For any $1\le q<p<\infty$,
$$\|T^{\mathcal F}f\|_{L^p(w)}\le c(n,T^{\mathcal F},q,p)[w]_{A_q}\|f\|_{L^p(w)},$$
and in the case $q=1$, $c(n,T^{\mathcal F},1,p)=c(n,T^{\mathcal F})pp'\psi(3p')$.
\item
Then for any $1<p<\infty$,
$$\|T^{\mathcal F}f\|_{L^p(w)}\le c(n,T^{\mathcal F},p)\psi\Big(c(p,n)[w]_{A_p}^{\frac{1}{p-1}}\Big)[w]_{A_p}^{\max(1,\frac{1}{p-1})}.$$
\end{enumerate}
Both estimates in (i) and (ii) are understood in the sense that they hold for any $f\in L^p(w)$ for which
$T^{\mathcal F}f\in S_0$.
\end{theorem}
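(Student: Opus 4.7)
The plan is to execute the local mean oscillation strategy that proves Theorem A in \cite{L3,L4}, now fed by the weak-type a priori hypothesis \eqref{cond1} in place of uniform $L^p$-boundedness of $T$. The ultimate target is a pointwise sparse domination
\begin{equation*}
|T^{\mathcal F}f(x)| \lesssim \psi(r') \sum_{Q \in \mathcal S}\bigg(\frac{1}{|Q|}\int_Q|f|^r\bigg)^{\!1/r}\chi_Q(x),
\end{equation*}
valid for any $r \in (1,2]$, where $\mathcal S$ is a sparse family of dyadic cubes depending on $f$. From this, both parts of the theorem will follow via known weighted estimates for sparse $r$-forms.

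The crucial step is the local oscillation bound
\begin{equation*}
\omega_{\lambda_n}(T^{\mathcal F}f;Q) \lesssim \psi(r') \bigg(\frac{1}{|Q^{*}|}\int_{Q^{*}}|f|^r\bigg)^{\!1/r}
\end{equation*}
for every cube $Q$, every $r \in (1,2]$ and some dimensional $\lambda_n>0$, where $Q^{*}$ is a fixed dilate of $Q$. Splitting $f = f\chi_{Q^{*}} + f\chi_{(Q^{*})^c}$, the local piece is controlled directly by \eqref{cond1} applied at exponent $r$, which bounds the non-increasing rearrangement $(T^{\mathcal F}(f\chi_{Q^{*}}))^{*}(\lambda_n|Q|)$ by the right-hand side. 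For the tail, one exploits that each $\mathcal M^{\phi_\alpha}$ acts by pointwise multiplication: the H\"ormander smoothness (ii) of the kernel of $T$ yields
\begin{equation*}
\big|T(\mathcal M^{\phi_\alpha}(f\chi_{(Q^{*})^c}))(x) - T(\mathcal M^{\phi_\alpha}(f\chi_{(Q^{*})^c}))(x')\big| \lesssim Mf(z), \qquad x,x',z \in Q,
\end{equation*}
uniformly in $\alpha$, so the supremum over $\alpha$ can be taken without loss.

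Plugging this into Lerner's local mean oscillation formula on a cube $Q_0$, namely
\begin{equation*}
|T^{\mathcal F}f(x) - m_{T^{\mathcal F}f}(Q_0)| \leq c_n \sum_{Q \in \mathcal S}\omega_{\lambda_n}(T^{\mathcal F}f;Q)\chi_Q(x),
\end{equation*}
gives the sparse bound on $Q_0$, after which the hypothesis $T^{\mathcal F}f\in S_0$ forces $m_{T^{\mathcal F}f}(Q_0)\to 0$ along an exhaustion $Q_0\to \mathbb R^n$, globalizing the estimate. For part (i) with $q=1$, pick $r$ with $r' = 3p'$, i.e.\ $r = 3p/(2p+1) \in (1,p)\cap(1,2]$, and invoke the Lerner-Ombrosi-P\'erez-type bound $pp'[w]_{A_1}$ for the $L^p(w)$ norm of the sparse $r$-form; the cases $1 < q < p$ follow by Duoandikoetxea's self-improvement/extrapolation scheme \cite{D}. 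For part (ii), the quantitative reverse H\"older inequality furnishes $\eta \sim [w]_{A_p}^{-1/(p-1)}$ with $w \in A_{p-\eta}$; choosing $r-1\sim \eta$ makes $r' \lesssim [w]_{A_p}^{1/(p-1)}$, which enters $\psi$, while the sharp $A_{p/r}$ bound for sparse $r$-forms delivers the factor $[w]_{A_p}^{\max(1,1/(p-1))}$.

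The main obstacle is Step 1: the supremum $T^{\mathcal F}f = \sup_\alpha|T(\mathcal M^{\phi_\alpha}f)|$ makes $T^{\mathcal F}$ non-linear, so a measurable linearization/selection argument is needed to rigorously apply \eqref{cond1} to the localized piece, and the tail estimate must carry a constant independent of $\alpha$. A secondary point is verifying that the passage $Q_0\to \mathbb R^n$ in the local mean oscillation formula is properly justified by $T^{\mathcal F}f\in S_0$ alone, without auxiliary a priori control on the operator.
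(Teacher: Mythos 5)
Your skeleton is the same as the paper's (a local mean oscillation estimate for $T^{\mathcal F}$ as in Lemma \ref{oscest}, Theorem \ref{lmoes} to pass to sparse operators as in Lemma \ref{kest}, then weighted bounds for $\mathcal A_{r,\mathcal S}$, with extrapolation for $1<q<p$ and the openness of $A_p$ for part (ii)), but the decisive step of part (i) is missing. There is no ``Lerner--Ombrosi--P\'erez-type bound'' for the sparse $r$-form $\mathcal A_{r,\mathcal S}$ with $r>1$ that you can simply invoke: the LOP result concerns Calder\'on--Zygmund operators and sparse operators with plain averages, and its proof hinges on a Coifman-type estimate relating $T^*$ to $M$, i.e.\ on $T^*$ being essentially the same operator as $T$ --- which is exactly what fails for (linearizations of) the Carleson operator, as explained in Remark \ref{rem5}. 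The inequality you need, namely $\sup_{\mathscr D,\mathcal S}\|\mathcal A_{r,\mathcal S}f\|_{L^p(w)}\le c(n)\big(\big(\frac{p+1}{2r}\big)'\big)^{1/r}p\,[w]_{A_1}\|f\|_{L^p(w)}$ for $1<r<\min(2,\frac{p+1}{2})$, is the heart of Section 3: the paper proves it by linearizing $\mathcal A_{r,\mathcal S}$ through duality functions $g_{(Q)}$ on each $\bar Q$, passing to the adjoint $L^*$, and establishing a new Coifman-type estimate $\|L^*h\|_{L^{p'}((M_sw)^{-1/(p-1)})}\lesssim \big(\big(\frac{p+1}{2r}\big)'\big)^{1/r}\|Mh\|_{L^{p'}((M_sw)^{-1/(p-1)})}$ via Fefferman--Stein, Coifman--Rochberg and a two-step H\"older argument, the $A_1$ constant entering only through $M_{s_w}w\le 2[w]_{A_1}w$ and Proposition \ref{sum}. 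Note also that a bound $pp'[w]_{A_1}$ uniform in $r$, as you state it, cannot hold; the constant must blow up as $r$ increases toward $p$, which is why the paper carries the explicit factor $\big(\big(\frac{p+1}{2r}\big)'\big)^{1/r}$ and only then optimizes in $r$ (obtaining $\psi(3p')$, rather than prescribing $r'=3p'$ exactly). Without this estimate, your part (i) is an assertion, not a proof.

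For part (ii) your outline coincides with the paper's strategy (take $r-1$ comparable to the openness gap of \eqref{bhpr}, so that $[w]_{A_{p/r}}\lesssim[w]_{A_p}$ and $r'\lesssim c(p,n)[w]_{A_p}^{1/(p-1)}$ feeds into $\psi$), but again you cite a ``sharp $A_{p/r}$ bound for sparse $r$-forms'' with constants uniform as $r\to1^+$ as if it were available; the paper instead proves the elementary Proposition \ref{pr}, giving $\mathcal A_{r,\mathcal S}f\le\mathcal T_{\mathcal S}f+2^n(r-1)\mathcal T_{\mathcal S}(M_rf)$, so that the large norm $\|M_r\|_{L^p(w)}$ is compensated by the factor $(r-1)$; you need this (or an equivalent mechanism) to make the sketch quantitative. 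Two smaller points: the pointwise sparse domination you target is stronger than what is needed or proved --- the kernel tail produces terms like $\inf_Q Mf$, i.e.\ the operators $\mathcal T_{\mathcal S,m}$ over dilated cubes, which the paper controls in norm (using $\sup_{\mathcal S}\|\mathcal T_{\mathcal S,m}\|\lesssim m\sup_{\mathscr D,\mathcal S}\|\mathcal T_{\mathcal S,0}\|$ and H\"older), not pointwise by $\mathcal A_{r,\mathcal S}$; and the linearization issue you flag as the ``main obstacle'' is in fact harmless: one subtracts the constant $T^{\mathcal F}(f_2)(x_0)$ and uses $|\sup_\a a_\a-\sup_\a b_\a|\le\sup_\a|a_\a-b_\a|$, exactly as in Lemma \ref{oscest}, so no measurable selection is required. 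The genuine difficulty you did not address is the weighted theory of $\mathcal A_{r,\mathcal S}$ itself.
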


We observe that the last sentence in Theorem \ref{mainr} can be removed if it is additionally known that $T^{\mathcal F}f\in S_0$ for some dense subset in $L^p(w)$, for instance, for Schwartz functions. In particular, this obviously holds if $T^{\mathcal F}$ is of weak type $(r_0,r_0)$ for some $r_0>1$. Hence, there is no need for the last sentence in  Theorem \ref{mainr} for  the Carleson operator and its lacunary version.

Observe also that if $T^{\mathcal F}=T$ is the standard Calder\'on-Zygmund operator, then $\psi=1$, and hence Theorem \ref{mainr}
contains Theorem A as a particular case.

Let us now turn to how assumption \eqref{cond1} is satisfied in the concrete cases we are interested in. We will prove in Section \ref{sectorlicz} that
\begin{equation}\label{wbcarl}\| \mathcal C(f)\|_{ {p,\infty}}\lesssim p'\log\log({\rm e}^{\rm e}+ p')\|f\|_{p}, \qquad (1<p\leq 2),
\end{equation}
as a consequence of Antonov's result:  this seems to be the best available dependence in the literature.  Our methods, based on the local sharp maximal function, are general enough to derive an appropriate form of \eqref{cond1} assuming some local Orlicz space boundedness into $L^{1,\infty}$.
In the  lacunary case, the bound \begin{equation}\label{wbcarllac}\| \mathcal C_{lac}(f)\|_{ {p,\infty}}\lesssim\log ( {\rm e} +p')\|f\|_{p}, \qquad (1<p\leq 2),
\end{equation}
the implicit constant depending only on the lacunarity constant $\theta$ of the associated sequence, can be obtained by suitably modifying the proof of the main result in \cite{LIE}: we send to the preprint \cite{DP2} for a thorough account of the necessary changes. We also note that \eqref{wbcarllac} entails \eqref{lacemb} as an easy consequence (see \cite{DP} for details).
In view of \eqref{wbcarl}-\eqref{wbcarllac}, taking $\psi(t)=t\log\log ({\rm e}^{\rm e} +t)$ ($\psi(t)=\log( {\rm e}+ t)$ respectively) in Theorem \ref{mainr} yields immediately the following corollaries.
\begin{cor}\label{carl} Let $\mathcal C$ be the Carleson operator.
\begin{enumerate}
\renewcommand{\labelenumi}{(\roman{enumi})}
\item For any $1\le q<p<\infty$,
$$\|\mathcal C\|_{L^p(w)}\le c(q,p)[w]_{A_q},$$
and in the case $q=1$, $c(1,p)\simeq p (p')^2\log\log({\rm e}^{\rm e}p')$;
\item for any $1<p<\infty$,
$$\|\mathcal C\|_{L^p(w)}\le c(p)[w]_{A_p}^{\max\big(p',\frac{2}{p-1}\big)} \log\log ({\rm e}^{\rm e}+[w]_{A_p}).$$
\end{enumerate}
\end{cor}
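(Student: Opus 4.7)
The approach is to apply Theorem \ref{mainr} directly to $T^{\mathcal F} = \mathcal{C}$, taking as $\psi$ the function $\psi(t) = t\log\log({\rm e}^{\rm e} + t)$, which is non-decreasing on $[1,\infty)$. First I would verify that the weak-type bound \eqref{wbcarl}, to be established in Section \ref{sectorlicz}, is precisely the hypothesis \eqref{cond1} for this choice of $\psi$, and observe that the last sentence of Theorem \ref{mainr} can be dropped for $\mathcal C$: by Carleson-Hunt, $\mathcal C$ is of weak type $(r_0, r_0)$ for all $r_0 > 1$ and hence sends Schwartz functions into $S_0$, so the bounds hold on a dense subset of $L^p(w)$.

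For part (i), I would specialize Theorem \ref{mainr}(i) to $q = 1$, obtaining $c(1,p) = c(\mathcal{C})\, pp'\, \psi(3p')$; substituting the chosen $\psi$ gives
\begin{equation*}
c(1,p) \lesssim pp' \cdot 3p' \log\log({\rm e}^{\rm e} + 3p') \simeq p(p')^2 \log\log({\rm e}^{\rm e}p'),
\end{equation*}
as claimed. The range $1 < q < p$ would then be obtained by the Duoandikoetxea self-improvement extrapolation \cite{D}, invoked exactly as in the passage from $q = 1$ to general $q$ in Theorem A.

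For part (ii), I would substitute the same $\psi$ into Theorem \ref{mainr}(ii). Since $[w]_{A_p} \geq 1$ and $p$ is fixed, one has
$\log\log({\rm e}^{\rm e} + c(p,n)[w]_{A_p}^{1/(p-1)}) \lesssim_p \log\log({\rm e}^{\rm e} + [w]_{A_p})$
(the outer logarithm absorbs both the power $\frac{1}{p-1}$ and the constant into a $p$-dependent factor), yielding
\begin{equation*}
\psi\bigl(c(p,n)[w]_{A_p}^{1/(p-1)}\bigr) \lesssim_p [w]_{A_p}^{1/(p-1)} \log\log({\rm e}^{\rm e} + [w]_{A_p}).
\end{equation*}
Multiplied by the remaining $[w]_{A_p}^{\max(1, 1/(p-1))}$ factor, the total $[w]_{A_p}$-exponent is $\max(1, 1/(p-1)) + \frac{1}{p-1}$, which equals $p'$ when $p \geq 2$ and $\frac{2}{p-1}$ when $1 < p < 2$; in both cases this is $\max\bigl(p', \frac{2}{p-1}\bigr)$, matching the statement. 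There is no real obstacle in this deduction: it is essentially arithmetic, absorbing only $p$-dependent constants. All the genuine content lies upstream, in Theorem \ref{mainr} and in the weak-type bound \eqref{wbcarl}.
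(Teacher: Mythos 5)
Your proposal is correct and follows exactly the paper's own route: the corollary is obtained by plugging $\psi(t)=t\log\log({\rm e}^{\rm e}+t)$ into Theorem \ref{mainr}, using the weak-type bound \eqref{wbcarl} from Section \ref{sectorlicz} as hypothesis \eqref{cond1} and the weak $(r_0,r_0)$ boundedness of $\mathcal C$ to dispense with the $S_0$ proviso, with the remaining work being the same arithmetic on exponents and absorption of $p$-dependent constants into the $\log\log$ factor. (Only minor remark: the extrapolation step for $1<q<p$ is already contained in Theorem \ref{mainr}(i) itself, so no separate invocation of \cite{D} is needed at the corollary level.)
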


\begin{cor}\label{carllac} Let ${\mathcal C}_{lac}$ be the lacunary Carleson operator.
\begin{enumerate}
\renewcommand{\labelenumi}{(\roman{enumi})}
\item For any $1\le q<p<\infty$,
$$\|{\mathcal C}_{lac}\|_{L^p(w)}\le c(q,p)[w]_{A_q},$$
and in the case $q=1$, $c(1,p)\simeq pp'\log(  {\rm e}+ p')$;
\item for any $1<p<\infty$,
$$\|{\mathcal C}_{lac}\|_{L^p(w)}\le c(p)[w]_{A_p}^{\max\big(1,\frac{1}{p-1}\big)} \log({\rm e}+ [w]_{A_p}).$$
\end{enumerate}
\end{cor}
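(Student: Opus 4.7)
The plan is to obtain Corollary \ref{carllac} as an immediate specialization of Theorem \ref{mainr} once assumption \eqref{cond1} is verified with the right $\psi$. The operator $\mathcal C_{lac}$ fits the framework of maximally modulated Calder\'on--Zygmund operators: the underlying singular integral is the Hilbert transform $H$, and the modulating family is $\mathcal F=\{\phi_{\xi}\}_{\xi\in\Xi}$ with $\phi_\xi(x)=\xi x$ indexed by the $\theta$-lacunary set $\Xi$. The weak-type bound \eqref{wbcarllac} then furnishes \eqref{cond1} with $\psi(t)=c\log(\mathrm{e}+t)$, where $c$ depends only on the lacunarity constant $\theta$.

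With this choice of $\psi$, part (i) for $q=1$ is read off from Theorem \ref{mainr}(i): the constant is
$$c(1,p)=c(n,T^{\mathcal F})pp'\,\psi(3p')\simeq pp'\log(\mathrm{e}+p'),$$
as claimed. The case $1<q<p$ follows either by a direct application of Theorem \ref{mainr}(i) with the same $\psi$, or (equivalently) by Duoandikoetxea's extrapolation of the $q=1$ bound. The removability of the qualifying sentence in Theorem \ref{mainr} is immediate: taking $p=2$ in \eqref{wbcarllac} shows that $\mathcal C_{lac}$ is of weak type $(2,2)$, so $\mathcal C_{lac}f\in S_0$ whenever $f$ is a Schwartz function, and Schwartz functions are dense in $L^p(w)$.

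For part (ii), Theorem \ref{mainr}(ii) produces
$$\|\mathcal C_{lac}\|_{L^p(w)}\le c(p)\,\log\bigl(\mathrm{e}+c(p,n)[w]_{A_p}^{1/(p-1)}\bigr)\,[w]_{A_p}^{\max(1,1/(p-1))}.$$
Since $[w]_{A_p}\ge 1$, the elementary inequality $\log(\mathrm{e}+at^{\alpha})\le C(a,\alpha)\log(\mathrm{e}+t)$ (valid for $t\ge1$, $a,\alpha>0$) lets us absorb the $(p,n)$-dependent constants and the exponent $1/(p-1)$ into an overall $c(p)$, yielding
$$\|\mathcal C_{lac}\|_{L^p(w)}\le c(p)\,[w]_{A_p}^{\max(1,1/(p-1))}\log(\mathrm{e}+[w]_{A_p}),$$
which is exactly (ii). The derivation contains no real obstacle; the substance of the argument lies entirely in Theorem \ref{mainr} and in the weak-type endpoint bound \eqref{wbcarllac}, both of which are available at this point.
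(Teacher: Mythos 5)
Your proposal is correct and follows exactly the paper's route: Corollary \ref{carllac} is obtained by feeding the weak-type bound \eqref{wbcarllac} into Theorem \ref{mainr} with $\psi(t)=\log(\mathrm{e}+t)$, reading off $c(1,p)\simeq pp'\psi(3p')\simeq pp'\log(\mathrm{e}+p')$ in (i) and absorbing $c(p,n)[w]_{A_p}^{1/(p-1)}$ inside the logarithm (using $[w]_{A_p}\ge 1$) in (ii), with the $S_0$ qualifier removed via the weak $(2,2)$ bound and density, just as in the remark following Theorem \ref{mainr}.
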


Since the linear $[w]_{A_q}, 1\le q<p,$ bound is sharp for the Hilbert transform, it is obviously sharp also for ${\mathcal C}$ and ${\mathcal C}_{lac}$.
We thouroughly discuss sharpness in terms of $[w]_{A_p}$ of the points (ii) in Section \ref{sect6} below; here, we mention that being able to drop the $\log\log$ term in \eqref{wbcarl} would produce the same effect in (ii) of Corollary \ref{carl}. For the Walsh analogue $\W$ of the Carleson operator, we are able to do so: relying on the analogue of condition \eqref{cond1}
\begin{equation}
\label{cond1walsh}
\|\W f\|_{L^{p,\infty}(\mathbb T)} \lesssim p'\|f\|_{L^{p}(\mathbb T)},
\end{equation}
 which has been established in \cite{DP2},
 we prove the following weighted theorem.
  \begin{theorem}\label{Walshthm} Let $\mathcal W$ be the Walsh-Carleson maximal operator, defined in \eqref{walshcarl} below.
\begin{enumerate}
\renewcommand{\labelenumi}{(\roman{enumi})}
\item For any $1\le q<p<\infty$,
$$\|\mathcal W\|_{L^p(w)}\le c(q,p)[w]_{A_q},$$
and in the case $q=1$, $c(1,p)\simeq p (p')^2$;
\item for any $1<p<\infty$,
$$\|\mathcal W\|_{L^p(w)}\le c(p)[w]_{A_p}^{\max\big(p',\frac{2}{p-1}\big)}.$$
\end{enumerate}
\end{theorem}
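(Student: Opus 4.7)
The plan is to run the proof of Theorem \ref{mainr} with $T^{\mathcal F}$ replaced by the Walsh-Carleson operator $\W$ and with $\psi(t) = t$, as provided by \eqref{cond1walsh}. A direct check confirms that the conclusion of Theorem \ref{mainr} with this choice of $\psi$ matches the claimed bounds: in (i) with $q=1$, $c(n, T^{\mathcal F}) pp' \psi(3p') \simeq p(p')^2$, and in (ii) the exponent $\tfrac{1}{p-1} + \max(1, \tfrac{1}{p-1}) = \max(p', \tfrac{2}{p-1})$. So Theorem \ref{Walshthm} reduces to verifying that the proof strategy of Theorem \ref{mainr} transfers to $\W$, even though the latter is not a maximally modulated Calder\'on-Zygmund operator on $\mathbb R^n$.

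The central step is to establish the dyadic analog of the local mean oscillation / sparse domination estimate underlying Theorem \ref{mainr}. Working on $\mathbb T$ with the standard dyadic grid, I would prove that for every $f$ with $\W f \in S_0$ there exists a sparse family $\mathcal S$ of dyadic subintervals of $\mathbb T$ and a pointwise bound of the form
\[
\W f(x) \le C \sum_{I \in \mathcal S} \|f\|_{Y,I}\,\chi_I(x),
\]
where $\|\cdot\|_{Y,I}$ is the local quantity dictated by \eqref{cond1walsh}, namely a normalized weak-$L^{p_I}$ average with $p_I$ close to $1$. The ingredient replacing Calder\'on-Zygmund kernel smoothness is the \emph{perfect} dyadic localization of Walsh partial sums: the Walsh functions of order $<2^j$ are constant on each dyadic interval of length $2^{-j}$, so $S_N$ with $N<2^j$ commutes with averaging onto such intervals, while $S_N-S_{2^j}$ with $N\ge 2^j$ has vanishing mean on intervals of length $2^{-j}$. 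This pinpoint locality is stronger than the H\"older smoothness in condition (ii) of the CZ kernel definition and, together with \eqref{cond1walsh}, delivers the sparse estimate with the linear $\psi(p')\simeq p'$ dependence.

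Once the sparse domination is available, (i) follows from the standard weighted theory of dyadic sparse operators, and the range $1<q<p$ is reached from $q=1$ via the Duoandikoetxea extrapolation theorem \cite{D}, exactly as in the proof of Theorem \ref{mainr}. Part (ii) follows by combining the $A_1$-type bound with the sharp reverse H\"older inequality, choosing $q=1+c/[w]_{A_p}$ to balance the constants against the $A_p$ characteristic. The main obstacle lies in the first of these three steps: executing the sparse decomposition so that the linear $p'$ dependence from \eqref{cond1walsh} is preserved, with no spurious logarithmic loss. The fact that this is achievable in the Walsh setting, where partial sums enjoy perfect dyadic localization, is precisely the reason Theorem \ref{Walshthm} is free of the $\log\log$ factor present in Corollary \ref{carl}.
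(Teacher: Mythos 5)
Your overall architecture is the same as the paper's: reduce to the machinery of Theorem \ref{mainr} with $\psi(t)=t$ via \eqref{cond1walsh}, prove a fully dyadic oscillation/sparse estimate on $\mathbb T$, then run the weighted bounds for sparse operators, extrapolation from $q=1$, and the reverse H\"older optimization; your constant bookkeeping ($pp'\psi(3p')\simeq p(p')^2$ and $\tfrac{1}{p-1}+\max(1,\tfrac{1}{p-1})=\max(p',\tfrac{2}{p-1})$) is correct. The gap is exactly in the step you yourself flag as the main obstacle, and the two facts you invoke there do not close it. What the local mean oscillation estimate requires is: for a dyadic $Q$ with $|Q|=2^{-j}$ and $f_2=f\chi_{Q^c}$, the contribution of $f_2$ on $Q$ is, for \emph{every} frequency $n$, a single constant (more precisely $|\W_n f_2|$ is constant on $Q$), so that $\o_{\la}(\W f;Q)$ is governed by $f_1=f\chi_Q$ alone, to which \eqref{cond1walsh} with $r$ close to $1$ applies and produces the clean factor $r'$. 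Your first fact (characters of order $<2^j$ are constant on length-$2^{-j}$ dyadic intervals) handles only $n<2^j$; your second fact (the block of frequencies $\ge 2^j$ has vanishing mean on such intervals) is the wrong kind of information: a mean-zero function on $Q$ can be arbitrarily large on half of $Q$, so it yields no pointwise or distributional control of the far contribution, and the frequencies $n\ge 2^j$ are precisely the difficult case.

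The missing ingredient, which is how the paper proves its Lemma \ref{lemma22walsh}, is the structural identity $\W_n f=W_n\,T_n f$, where $T_n$ is a Haar martingale transform applied to $fW_n$ (Thiele's Walsh wave packet identity $w_I=b_I h_I W_n$, see \cite{ThTR,HL2}). Every Haar function $h_I$ appearing in $T_n f_2$ has $I\cap Q^c\neq\emptyset$, hence is either zero on $Q$ or constant on $Q$; therefore $T_n f_2$, and thus $|\W_n f_2|$, is constant on $Q$ for all $n$. This is the precise form of ``perfect dyadic localization'' needed (equivalently: for $y\notin Q$ the Walsh--Dirichlet kernel $D_n(x\oplus y)$ factors as $W_n(x)$ times a quantity independent of $x\in Q$), and it is strictly stronger than the two properties you listed. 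Two smaller corrections: the local quantity in the sparse bound should be the strong $L^r$ average $\bigl(\frac{1}{|Q|}\int_Q|f|^r\bigr)^{1/r}$ (this is what the weak-$(r,r)$ bound applied to $f\chi_Q$ gives after evaluating the rearrangement at level $\la|Q|$), not a local weak norm; and on $\mathbb T$ one cannot let $Q_0$ grow, so the median term $m_{\W f}(\mathbb T)$ must be absorbed by applying \eqref{cond1walsh} on $Q_0=\mathbb T$ itself, as in \eqref{intes3}. Finally, part (ii) is not obtained by ``combining the $A_1$ bound with reverse H\"older'': as in Section \ref{sect4}, one dominates $\mathcal A_{r,\mathcal S}$ by $\mathcal T_{\mathcal S}+2^n(r-1)\mathcal T_{\mathcal S}M_r$ (Proposition \ref{pr}) and optimizes $r=p/(p-\e)$ with $\e\simeq[w]_{A_p}^{1-p'}$; this is, however, already contained in your stated reduction to the proof of Theorem \ref{mainr}.
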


The article is organized as follows. In Section 2, we obtain a local mean oscillation estimate of $T^{\mathcal F}$, and the corresponding bound by
dyadic sparse operators. Using this result, we prove Theorem \ref{mainr} in Sections \ref{sect3} and \ref{sect4}.  Section \ref{secwalsh} contains the proof of Theorem \ref{Walshthm}; in Section \ref{sectorlicz}, we relate assumption (\ref{cond1}) to local Orlicz bounds.

Throughout the paper, we use the notation $A\lesssim B$ to indicate that there is a constant $c$, independent of the important parameters, such that $A\leq cB$.
We write $A\simeq B$ when $A\lesssim B$ and $B\lesssim A$.

\vskip 2mm
{\bf Acknowledgement.}  The second author is very grateful to Loukas Grafakos for his useful comments
on the Carleson operator.

\section{An estimate of $T^{\mathcal F}$ by dyadic sparse operators}
\subsection{A local mean oscillation estimate}
By a general dyadic grid ${\mathscr{D}}$ we mean a collection of
cubes with the following properties: (i)
for any $Q\in {\mathscr{D}}$ its sidelength $\ell_Q$ is of the form
$2^k, k\in {\mathbb Z}$; (ii) $Q\cap R\in\{Q,R,\emptyset\}$ for any $Q,R\in {\mathscr{D}}$;
(iii) the cubes of a fixed sidelength $2^k$ form a partition of ${\mathbb
R}^n$.

Denote the standard dyadic grid $\{2^{-k}([0,1)^n+j), k\in{\mathbb Z}, j\in{\mathbb Z}^n\}$
by ${\mathcal D}$.
Given a cube $Q_0$, denote by ${\mathcal D}(Q_0)$ the set of all
dyadic cubes with respect to $Q_0$, that is, the cubes from ${\mathcal D}(Q_0)$ are formed
by repeated subdivision of $Q_0$ and each of its descendants into $2^n$ congruent subcubes.

We say that a family of cubes ${\mathcal S}$ is sparse if for any cube $Q\in {\mathcal S}$ there is a
measurable subset $E(Q)\subset Q$ such that $|Q|\le 2|E(Q)|$, and the sets $\{E(Q)\}_{Q\in {\mathcal S}}$
are pairwise disjoint.

Given a measurable function $f$ on ${\mathbb R}^n$ and a cube $Q$,
the local mean oscillation of $f$ on $Q$ is defined by
$$\o_{\la}(f;Q)=\inf_{c\in {\mathbb R}}
\big((f-c)\chi_{Q}\big)^*\big(\la|Q|\big)\quad(0<\la<1),$$
where $f^*$ denotes the non-increasing rearrangement of $f$.

By a median value of $f$ over $Q$ we mean a possibly nonunique, real
number $m_f(Q)$ such that
$$\max\big(|\{x\in Q: f(x)>m_f(Q)\}|,|\{x\in Q: f(x)<m_f(Q)\}|\big)\le |Q|/2.$$

The following result was proved in \cite{L1}; in its current refined version given below it can be found in \cite{Hyt2}.

\begin{theorem}\label{lmoes} Let $f$ be a measurable function on
${\mathbb R}^n$ and let $Q_0$ be a fixed cube. Then there exists a
(possibly empty) sparse family ${\mathcal S}$ of cubes from ${\mathcal D}(Q_0)$ such that for a.e. $x\in Q_0$,
$$
|f(x)-m_f(Q_0)|\le 2\sum_{Q\in {\mathcal S}}
\o_{\frac{1}{2^{n+2}}}(f;Q)\chi_{Q}(x).
$$
\end{theorem}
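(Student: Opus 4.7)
The plan is to prove the decomposition via a Calder\'on--Zygmund stopping time construction, following Lerner's original approach. Set $\la=2^{-(n+2)}$. The preliminary observation to record is that for every cube $Q$,
$$
|\{x\in Q:|f(x)-m_f(Q)|>2\o_{\la}(f;Q)\}|\le \la|Q|.
$$
To see this, I would pick a near-minimizer $c^*$ in the infimum defining $\o_\la(f;Q)$, so that $|\{x\in Q: |f(x)-c^*|>\o_\la(f;Q)\}|\le\la|Q|$; since $\la<1/2$, this forces $|m_f(Q)-c^*|\le\o_\la(f;Q)$, and the triangle inequality gives the claim.

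Next, I would construct $\mathcal{S}$ recursively, starting from $\mathcal{S}_0=\{Q_0\}$. For each $Q\in\mathcal S$, set $E_Q=\{x\in Q: |f(x)-m_f(Q)|>2\o_\la(f;Q)\}$, select the pairwise disjoint maximal cubes $P\in \mathcal{D}(Q)\setminus\{Q\}$ satisfying $|E_Q\cap P|>|P|/2^{n+1}$, and add them to $\mathcal S$ as the \emph{children} of $Q$. Since $|E_Q|\le\la|Q|<|Q|/2^{n+1}$, the cube $Q$ itself never triggers the condition, so the children are strict subcubes. Sparsity follows at once:
$$
\sum_{P\text{ child of }Q}|P|\le 2^{n+1}|E_Q|\le 2^{n+1}\la|Q|=|Q|/2,
$$
hence $E(Q):=Q\setminus\bigcup_{P}P$ has measure at least $|Q|/2$ and the sets $\{E(Q)\}_{Q\in\mathcal S}$ are pairwise disjoint by construction.

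The key median-stability estimate to record next is that for each child $P$ of $Q$, $|m_f(P)-m_f(Q)|\le 2\o_\la(f;Q)$. By maximality, the dyadic parent $\widehat P$ of $P$ fails the stopping condition, so $|E_Q\cap P|\le |E_Q\cap\widehat P|\le |\widehat P|/2^{n+1}=|P|/2$, and hence the complementary subset of $P$, on which $|f-m_f(Q)|\le 2\o_\la(f;Q)$, has measure at least $|P|/2$. By the very definition of the median, this forces $m_f(P)$ to lie in $[m_f(Q)-2\o_\la(f;Q),\,m_f(Q)+2\o_\la(f;Q)]$.

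To conclude, for a.e.\ $x\in Q_0$ I would consider the decreasing chain $Q_0=Q^0(x)\supsetneq Q^1(x)\supsetneq\cdots$ of cubes of $\mathcal S$ containing $x$. If the chain terminates at $Q^K(x)$, a standard Calder\'on--Zygmund density-point argument shows that a.e.\ $x$ not lying in any child of $Q^K(x)$ also lies outside $E_{Q^K(x)}$, whence $|f(x)-m_f(Q^K(x))|\le 2\o_\la(f;Q^K(x))$; if the chain is infinite, its sidelengths shrink geometrically to $0$ and a Lebesgue differentiation argument gives $m_f(Q^k(x))\to f(x)$ for a.e.\ $x$. Telescoping along the chain and invoking the median-stability estimate on each link yields the desired bound $|f(x)-m_f(Q_0)|\le 2\sum_{Q\in \mathcal S}\o_\la(f;Q)\chi_Q(x)$. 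The principal subtlety will be making the a.e.\ convergence rigorous in the infinite-chain case, which can be bypassed by first carrying out the construction only within cubes of sidelength at least $2^{-N}$ and then passing to the limit $N\to\infty$ via the monotonicity of the nested families.
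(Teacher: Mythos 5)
Your construction is essentially the standard Lerner--Hyt\"onen stopping-time argument (the paper itself does not prove Theorem \ref{lmoes}; it quotes it from \cite{L1} and, in the refined form stated, from \cite{Hyt2}), and most of the steps are sound: the estimate $|E_Q|\le \lambda|Q|$, the sparsity count, the density argument handling points not covered by any child, and the telescoping along chains. The one step that fails as written is the median-stability claim. From maximality you only obtain $|E_Q\cap P|\le |E_Q\cap\widehat P|\le |\widehat P|/2^{n+1}=|P|/2$, so the good set $A=P\setminus E_Q$ has measure $\ge |P|/2$, \emph{not} strictly greater; and with exactly half measure a median of $f$ on $P$ need not lie in $[m_f(Q)-2\omega_{\lambda}(f;Q),\,m_f(Q)+2\omega_{\lambda}(f;Q)]$, because medians are not unique. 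Concretely, if $f=m_f(Q)$ on one half of $P$ and $f=m_f(Q)+10^{6}\,\omega_{\lambda}(f;Q)$ on the other half, the large value is a legitimate median of $f$ over $P$; the reflected example rules out fixing this by any canonical choice (smallest, largest, or midpoint of the median interval). Indeed, your appeal to ``the very definition of the median'' only yields $|\{x\in P: f<m_f(P)\}|\ge |P|/2$, which does not contradict the defining inequality $\le |P|/2$.

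The repair is cheap and leaves everything else intact: make the stopping condition non-strict, i.e.\ select the maximal $P$ with $|E_Q\cap P|\ge |P|/2^{n+1}$. Then the dyadic parent of a selected cube (or $Q$ itself, since $|E_Q|\le 2^{-n-2}|Q|<2^{-n-1}|Q|$) \emph{strictly} fails the condition, whence $|E_Q\cap P|<|P|/2$ and $|A|>|P|/2$, which does force every median of $f$ on $P$ into the desired interval; the sparsity bound $\sum_P|P|\le 2^{n+1}|E_Q|\le |Q|/2$ and the density argument are unaffected. One further small point: since $f$ is only assumed measurable (not locally integrable), the convergence $m_f(Q^k(x))\to f(x)$ along infinite chains should be justified by the a.e.\ approximate continuity of measurable functions --- at such points the set $\{|f-f(x)|>\e\}$ has vanishing density, which pins every median over small cubes containing $x$ to within $\e$ of $f(x)$ --- rather than by Lebesgue differentiation of averages. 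With these two adjustments your argument is complete, and the truncation at scale $2^{-N}$ you propose at the end is not needed.
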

Given a measurable function $f$ on ${\mathbb R}^n$, define the local sharp maximal function $M_{\la}^{\#}f$ by
$$M_{\la}^{\#}f(x)=\sup_{Q\ni x}\o_{\la}(f;Q),$$
where the supremum is taken over all cubes $Q\subset {\mathbb R}^n$ containing the point $x$.

\begin{lemma}\label{locsh} For any $f\in S_0$ and for all $p>0$,
\begin{equation}\label{locshest}
\|f\|_{L^{p,\infty}}\le 3p\|M_{\la}^{\#}f\|_{L^{p,\infty}},
\end{equation}
where $\la$ depends only on $n$.
\end{lemma}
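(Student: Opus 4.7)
The plan is to combine the sparse domination from Theorem \ref{lmoes} with a sharp weak-type bound on the resulting sparse averaging operator. First, I would use the $S_0$ hypothesis to discard the median term: the definition of the median directly forces $|m_f(Q_0)| \le \e$ whenever $|Q_0| \ge 2\mu_f(\e)$, which is finite by $f\in S_0$. Thus, given a threshold $t>0$, choose a cube $Q_0$ centered at the origin, large enough that $|Q_0| \ge 2\mu_f(t/2)$ and that $\{|f|>t\}\subset Q_0$ up to arbitrarily small error. On $Q_0$ one has $|m_f(Q_0)| \le t/2$, and Theorem \ref{lmoes} yields a sparse family $\mathcal{S}\subset \mathcal{D}(Q_0)$ with
\[
|f(x)| \le \tfrac{t}{2} + 2 \sum_{Q \in \mathcal{S}} \omega_\lambda(f;Q) \chi_Q(x) \qquad \text{a.e.\ on }Q_0,
\]
where $\lambda = 2^{-(n+2)}$. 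Setting $g := M_\lambda^\# f$, and using $M_\lambda^\# f(y) \ge \omega_\lambda(f;Q)$ for every $y\in Q$, the sum is pointwise dominated by the standard sparse averaging operator $\mathcal{A}_\mathcal{S} g(x) := \sum_{Q\in\mathcal{S}}\langle g\rangle_Q\chi_Q(x)$.

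This reduces the lemma to the sharp weak-type sparse estimate $|\{\mathcal{A}_\mathcal{S} g > s\}| \le (Cp/s)^p \|g\|^p_{L^{p,\infty}}$; taking $s = t/4$ and sending $|Q_0|\to\infty$ then delivers the claim. To prove the sparse estimate, I would stratify $\mathcal{S}$ into generations where $\langle g\rangle_Q$ doubles, use the sparsity $|E(Q)|\ge|Q|/2$ on pairwise disjoint $E(Q)\subset Q$ together with the raw bound $|\{g>s\}|\le s^{-p}\|g\|^p_{L^{p,\infty}}$ to control each generation, and sum a geometric series in the generation index. The factor of $p$ emerges from the Marcinkiewicz-type conversion of distributional data into $L^{p,\infty}$ bounds, exactly as in the identity $\|h\|_{L^p}^p = p\int_0^\infty s^{p-1}|\{h>s\}|\,ds$.

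The main obstacle is pinning down the numerical constant $3p$ sharply, as opposed to a generic $C(n)p$. The example of nested dyadic intervals with $g = \chi_{[0,1]}$ already shows that $\|\mathcal{A}_\mathcal{S} g\|_{L^{p,\infty}}$ genuinely grows linearly in $p$, so linear dependence is unavoidable; however, obtaining the coefficient $3$ requires optimally balancing the split of $t$ into a median contribution $t/2$ and a sparse contribution $t/2$, absorbing the factor $2$ from Theorem \ref{lmoes} and the sparsity factor $2$, and ensuring the geometric series in generations sums to an absolute constant so that exactly one factor of $p$ enters the final bound.
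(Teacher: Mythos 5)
Your reduction breaks at the key sparse estimate. The inequality you propose to prove, $|\{\mathcal{A}_{\mathcal S}g>s\}|\le (Cp/s)^p\|g\|_{L^{p,\infty}}^p$, i.e.\ $\|\mathcal{A}_{\mathcal S}\|_{L^{p,\infty}\to L^{p,\infty}}\le Cp$, is false in exactly the range where the lemma is claimed and used. For $0<p<1$ the averaging sparse operator does not even map $L^{p,\infty}$ into itself: with $g=\chi_{[0,1]}$ and the sparse family $\mathcal S=\{[0,2^k]:k\ge 0\}$ (take $E([0,2^k])=(2^{k-1},2^k]$) one gets $\mathcal{A}_{\mathcal S}g(x)\simeq \min(1,1/x)$, whose distribution function is $\simeq 1/s$ for small $s$, so $\mathcal{A}_{\mathcal S}g\notin L^{p,\infty}$ when $p<1$, while the lemma is asserted for all $p>0$. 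Even for $p>1$ the constant cannot be linear in $p$ uniformly: with $g(x)=x^{-1/p}\chi_{(0,1)}$ and $\mathcal S=\{[0,2^{-k}]:k\ge0\}$ one has $\langle g\rangle_{[0,2^{-k}]}=p'2^{k/p}$ and hence $\mathcal{A}_{\mathcal S}g\gtrsim p'\,g$ pointwise, so the $L^{p,\infty}\to L^{p,\infty}$ norm is at least $\simeq p'$, which blows up as $p\to1^+$. The loss enters precisely where you replace $\omega_\lambda(f;Q)\le \inf_{Q}M_\lambda^{\#}f$ by the cube average $\langle M_\lambda^{\#}f\rangle_Q$: averaging an $L^{p,\infty}$ function over a cube costs a factor $\simeq p'$ (and the average can be infinite when $p\le1$), whereas the infimum costs nothing. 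This is not a cosmetic issue for the paper: Lemma \ref{locsh} is applied in Proposition \ref{orl} with $1<p\le 2$, and an extra $p'$ there would change $\psi$ from $\gamma_\Phi(p')$ to roughly $p'\gamma_\Phi(p')$, degrading \eqref{wbcarl} and Corollary \ref{carl}. A repair along your lines would have to keep the infimum and run a level-set argument for $\sum_{Q}(\inf_Q g)\chi_Q$, which in effect amounts to reproving the rearrangement inequality the paper actually uses; your remarks about optimizing the constant $3$ are secondary to this.

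The paper's proof is entirely different and does not invoke Theorem \ref{lmoes} at all: it uses the pointwise rearrangement estimate from \cite{L}, $f^*(t)\le 2(M_{\lambda_n}^{\#}f)^*(2t)+f^*(2t)$. Since $f\in S_0$ means $f^*(\infty)=0$, iterating gives $f^*(t)\le 2\sum_{k\ge1}(M_{\lambda_n}^{\#}f)^*(2^kt)\le \frac{2}{\log 2}\int_t^\infty (M_{\lambda_n}^{\#}f)^*(s)\frac{ds}{s}$, and inserting $(M_{\lambda_n}^{\#}f)^*(s)\le s^{-1/p}\|M_{\lambda_n}^{\#}f\|_{L^{p,\infty}}$ yields the bound $3p\,t^{-1/p}\|M_{\lambda_n}^{\#}f\|_{L^{p,\infty}}$ for every $p>0$ in one stroke, with no sparse operator and no $p'$ loss.
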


\begin{proof} We use the following rearrangement estimate proved in \cite{L}:
\begin{equation}\label{rear}
f^*(t)\le 2(M_{\la_n}^{\#}f)^*(2t)+f^*(2t)\quad(t>0).
\end{equation}
Since $f\in S_0$ is equivalent to that $f^*(\infty)=0$, iterating (\ref{rear}) yields
\begin{eqnarray*}
f^*(t)&\le& 2\sum_{k=0}^{\infty}(M_{\la_n}^{\#}f)^*(2^kt)\le \frac{2}{\log 2}
\sum_{k=0}^{\infty}\int_{2^{k-1}t}^{2^kt}(M_{\la_n}^{\#}f)^*(s)\frac{ds}{s}\\
&\le& 3\int_t^{\infty}(M_{\la_n}^{\#}f)^*(s)\frac{ds}{s}\le 3pt^{-1/p}\|M_{\la_n}^{\#}f\|_{L^{p,\infty}},
\end{eqnarray*}
which proves (\ref{locshest}).
\end{proof}

Observe that Lemma \ref{locsh} was obtained in \cite{JT} by a different method with
an exponential dependence on $p$.

\subsection{An application to $T^{\mathcal F}$} We now apply Theorem \ref{lmoes} to $T^{\mathcal F}$.
Given a cube $Q$, we denote $\bar Q=2\sqrt n Q$.

\begin{lemma}\label{oscest}
Suppose $T^{\mathcal F}$ satisfies \emph{(\ref{cond1})}. Then for any cube $Q\subset {\mathbb R}^n$ and for all $1<r\le 2$,
\begin{eqnarray}
\o_{\la}(T^{\mathcal F}f;Q)&\lesssim& \psi(r')\left(\frac{1}{|\bar Q|}\int_{\bar Q}|f|^r\right)^{1/r}\label{osc}\\
&+&\sum_{m=0}^{\infty}\frac{1}{2^{m\d}}\left(\frac{1}{|2^mQ|}\int_{2^mQ}|f|\right).\nonumber
\end{eqnarray}
\end{lemma}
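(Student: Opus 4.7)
The plan is to follow the classical local-mean-oscillation strategy for Calder\'on-Zygmund operators, splitting $f$ into a local and a non-local piece relative to the enlarged cube $\bar Q$, and controlling the two contributions via the weak-type input \eqref{cond1} and the kernel smoothness (ii) respectively. Write $f=f_1+f_2$ with $f_1=f\chi_{\bar Q}$ and $f_2=f-f_1$. For each phase $\phi_\a$ the pointwise bound $|T(\mathcal M^{\phi_\a}f)|\le |T(\mathcal M^{\phi_\a}f_1)|+|T(\mathcal M^{\phi_\a}f_2)|$ together with the sublinearity inequality $|\sup_\a a_\a-\sup_\a b_\a|\le \sup_\a|a_\a-b_\a|$ yields $|T^{\mathcal F}f-T^{\mathcal F}f_2|\le T^{\mathcal F}f_1$, hence for any constant $c$,
$$\o_{\la}(T^{\mathcal F}f;Q)\le \bigl((T^{\mathcal F}f_1)\chi_Q\bigr)^{*}\!\bigl(\tfrac{\la}{2}|Q|\bigr)+\bigl((T^{\mathcal F}f_2-c)\chi_Q\bigr)^{*}\!\bigl(\tfrac{\la}{2}|Q|\bigr)$$
by the subadditivity $(g+h)^{*}(t)\le g^{*}(t/2)+h^{*}(t/2)$ of the non-increasing rearrangement.

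For the local piece, \eqref{cond1} applied at exponent $r\in(1,2]$ to $f_1$ gives $\|T^{\mathcal F}f_1\|_{L^{r,\infty}}\lesssim \psi(r')\|f_1\|_{r}$; Chebyshev's inequality in its weak-$L^r$ form then produces
$$\bigl((T^{\mathcal F}f_1)\chi_Q\bigr)^{*}\!\bigl(\tfrac{\la}{2}|Q|\bigr)\lesssim \frac{\psi(r')}{(\la|Q|)^{1/r}}\|f\chi_{\bar Q}\|_{r}\lesssim \psi(r')\Big(\frac{1}{|\bar Q|}\int_{\bar Q}|f|^{r}\Big)^{1/r},$$
the factors $(2\sqrt n)^{n/r}$ and $\la^{-1/r}$ being absorbed into the implicit constant; this is the first summand in \eqref{osc}.

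For the non-local piece, since $f_2$ vanishes on $\bar Q\supset 2Q$, for every $x,x'\in Q$ and every $\a$,
$$|T(\mathcal M^{\phi_\a}f_2)(x)-T(\mathcal M^{\phi_\a}f_2)(x')|\le \int_{{\mathbb R}^n\setminus \bar Q}|K(x,y)-K(x',y)|\,|f(y)|\,dy,$$
the modulating factor ${\rm e}^{2\pi i\phi_\a(y)}$ being of unit modulus. A dyadic decomposition over the annuli $2^{m+1}\bar Q\setminus 2^m\bar Q$, combined with the kernel smoothness (ii), produces an $\a$-independent upper bound of the form $\sum_{m=0}^\infty 2^{-m\d}|2^mQ|^{-1}\int_{2^mQ}|f|=:S(Q,f)$. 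The very same sublinearity trick then transfers this estimate to $T^{\mathcal F}f_2$ itself, so that choosing $c=\mathrm{ess\,inf}_{Q}T^{\mathcal F}f_2$ dominates the second rearrangement term by $S(Q,f)$, recovering the tail in \eqref{osc}.

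The only step I would double-check is the passage of the supremum through the kernel-smoothness estimate; it works precisely because condition (ii) makes no reference to the modulating phase, so the dyadic tail sum serves as a uniform-in-$\a$ pointwise oscillation bound for each $T(\mathcal M^{\phi_\a}f_2)$ on $Q$, and the sublinearity inequality then promotes it to a bound on the oscillation of the sup. No further obstacle is expected: the whole argument uses only absolute-value estimates that survive the supremum.
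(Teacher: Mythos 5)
Your argument is correct and is essentially the paper's own proof: the same splitting $f=f_1+f_2$ relative to $\bar Q$, the weak-type hypothesis \eqref{cond1} at exponent $r$ for the local piece, and the kernel smoothness (ii), which is uniform in the modulation, for the tail. The only differences are cosmetic: the paper takes $c=T^{\mathcal F}(f_2)(x_0)$ with $x_0$ the center of $Q$ and bounds $|T^{\mathcal F}f(x)-T^{\mathcal F}f_2(x_0)|$ in one step, whereas you split the rearrangement at level $\tfrac{\la}{2}|Q|$ and choose $c=\mathrm{ess\,inf}_Q T^{\mathcal F}f_2$ (note only that for arbitrary $x,x'\in Q$ one should pass through the center to ensure the hypothesis $|x-x'|<|x-y|/2$ of condition (ii), at the cost of a factor $2$).
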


\begin{proof}
This result is a minor modification of \cite[Prop. 2.3]{L3}, and it is essentially contained in \cite[Prop. 4.1]{GMS}.
We  briefly outline the main steps of proof.

Set $f_1=f\chi_{\bar Q}$ and $f_2=f-f_1$. Let $x\in Q$ and let $x_0$ be the center of $Q$. Then
\begin{eqnarray*}
&&|T^{\mathcal F}(f)(x)-T^{\mathcal F}(f_2)(x_0)|\\
&&=\Big|\sup_{\a\in A}
|T({\mathcal M}^{\phi_{\a}}f)(x)|-\sup_{\a\in A}|T({\mathcal M}^{\phi_{\a}}f_2)(x_0)|\Big|\\
&&\le \sup_{\a\in A}|T({\mathcal M}^{\phi_{\a}}f)(x)-T({\mathcal M}^{\phi_{\a}}f_2)(x_0)|\\
&&\le T^{\mathcal F}(f_1)(x)+\sup_{\a\in A}\|T({\mathcal M}^{\phi_{\a}}f_2)(\cdot)-T({\mathcal M}^{\phi_{\a}}f_2)(x_0)\|_{L^{\infty}(Q)}.
\end{eqnarray*}

Exactly as in \cite[Prop. 2.3]{L3}, by the kernel assumption,
\begin{eqnarray*}
&&\sup_{\a\in A}\|T({\mathcal M}^{\phi_{\a}}f_2)(\cdot)-T({\mathcal M}^{\phi_{\a}}f_2)(x_0)\|_{L^{\infty}(Q)}\\
&&\le \int_{{\mathbb R}^n\setminus \bar Q}|f(y)|\|K(\cdot,y)-K(x_0,y)\|_{L^{\infty}(Q)}dy\\
&&\lesssim \sum_{m=0}^{\infty}\frac{1}{2^{m\d}}\left(\frac{1}{|2^mQ|}\int_{2^mQ}|f|\right).
\end{eqnarray*}
For the local part, by (\ref{cond1}),
$$\big(T^{\mathcal F}(f_1)\chi_Q\big)^*(\la|Q|)\lesssim \psi(r')\left(\frac{1}{|\bar Q|}\int_{\bar Q}|f|^r\right)^{1/r}.$$
Combining this estimate with the two previous ones, and taking $c=T^{\mathcal F}(f_2)(x_0)$ in the definition of $\o_{\la}(T^{\mathcal F}f;Q)$
proves (\ref{osc}).
\end{proof}

Given a sparse family ${\mathcal S}$, define the operators ${\mathcal A}_{r, \mathcal S}$ and ${\mathcal T}_{\mathcal S,m}$
respectively by
$${\mathcal A}_{r, \mathcal S}f(x)=\sum_{Q\in {\mathcal S}}\left(\frac{1}{|\bar Q|}\int_{\bar Q}|f|^r\right)^{1/r}\chi_Q(x).$$
and
$$
{\mathcal T}_{\mathcal S,m}f(x)=\sum_{Q\in {\mathcal S}}\left(\frac{1}{|2^mQ|}\int_{2^mQ}|f|\right)\chi_Q(x)
$$

\begin{lemma}\label{kest} Suppose $T^{\mathcal F}$ satisfies \emph{(\ref{cond1})}. Let $1<p<\infty$ and let $w$ be an arbitrary weight. Then
\begin{equation}\label{rightA}
\|T^{\mathcal F}f\|_{L^p(w)}\lesssim  \inf_{1<r\le 2}\Big\{\psi(r')\sup_{{\mathscr{D}},{\mathcal S}}\|{\mathcal A}_{r, \mathcal S}f\|_{L^p(w)}\Big\}
\end{equation}
for any $f$ for which $T^{\mathcal F}f\in S_0$, where the supremum is taken over all dyadic grids ${\mathscr{D}}$ and all
sparse families ${\mathcal S}\subset {\mathscr{D}}$.
\end{lemma}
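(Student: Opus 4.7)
The plan is to derive, for each $f$ satisfying the hypothesis, a pointwise a.e.\ domination of $|T^\mathcal{F}f|$ by a sparse sum of oscillations, insert the estimate of Lemma \ref{oscest} to convert this into an $\mathcal{A}_{r,\mathcal{S}}$-term plus a tail, and then absorb the tail. Concretely, I would first fix a dyadic grid $\mathscr{D}$ (translated so that every $x \in \mathbb R^n$ eventually lies in arbitrarily large cubes of $\mathscr{D}$) and apply Theorem \ref{lmoes} to $T^\mathcal{F}f$ on an exhausting sequence of cubes $Q_0^{(k)} \in \mathscr{D}$. This yields sparse families $\mathcal{S}^{(k)} \subset \mathcal{D}(Q_0^{(k)})$. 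Since $T^\mathcal{F}f \in S_0$ by hypothesis, the medians $m_{T^\mathcal{F}f}(Q_0^{(k)})$ tend to zero as $k \to \infty$, and a standard diagonal extraction produces a single sparse family $\mathcal{S} \subset \mathscr{D}$ with
$$|T^\mathcal{F}f(x)| \le 2\sum_{Q \in \mathcal{S}}\omega_\lambda(T^\mathcal{F}f;\,Q)\chi_Q(x)\quad\text{a.e.,}\qquad \lambda = 2^{-(n+2)}.$$

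Next, for a fixed $r \in (1,2]$, I would insert the oscillation bound of Lemma \ref{oscest} and exchange the orders of summation to obtain the pointwise estimate
$$|T^\mathcal{F}f(x)| \lesssim \psi(r')\mathcal{A}_{r,\mathcal{S}}f(x) + \sum_{m=0}^\infty 2^{-m\delta}\mathcal{T}_{\mathcal{S},m}f(x).$$
The first term on the right-hand side is already of the form demanded by \eqref{rightA}, with the correct $\psi(r')$ prefactor. Thus the lemma reduces to showing that the $L^p(w)$-norm of the tail $\sum_{m\ge 0} 2^{-m\delta}\mathcal{T}_{\mathcal{S},m}f$ is bounded by a constant multiple of $\sup_{\mathscr{D}',\mathcal{S}'}\|\mathcal{A}_{r,\mathcal{S}'}f\|_{L^p(w)}$; once this is available, the infimum over $r \in (1,2]$ delivers \eqref{rightA}.

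The tail absorption is the main technical obstacle. My intended route is via the Okikiolu--Hyt\"onen three-lattice lemma: for each pair $(Q,m)$ with $Q \in \mathcal{S}$ and $m \ge 0$, one selects a dyadic cube $Q' = Q'(Q,m) \in \mathscr{D}_j$ from a universal collection of $3^n$ grids $\{\mathscr{D}_j\}$ with $Q' \supset 2^m Q$ and $\ell_{Q'} \le 3\cdot 2^m \ell_Q$. This guarantees $\bar{Q'} \supset 2^m Q$ and $|\bar{Q'}| \simeq |2^m Q|$ (with constants depending only on $n$), whence $\langle|f|\rangle_{2^m Q} \lesssim \langle|f|\rangle_{\bar{Q'}}$. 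Regrouping the cubes $\{Q'(Q,m) : Q \in \mathcal{S}\}$ grid by grid and scale by scale into (possibly weakly) sparse subfamilies $\mathcal{S}^\#_{m,j}$ and using $\chi_Q \le \chi_{Q'}$, one dominates $\mathcal{T}_{\mathcal{S},m}f$ by a finite sum of sparse averages $\mathcal{A}_{1,\mathcal{S}^\#_{m,j}}f$ at the cost of a multiplicative constant in $m$; the geometric damping $2^{-m\delta}$ should absorb this loss. A final application of H\"older's inequality $\mathcal{A}_{1,\mathcal{S}'}f \le \mathcal{A}_{r,\mathcal{S}'}f$, valid for any $r \ge 1$, then passes from $\mathcal{A}_1$-sparse operators to the required $\mathcal{A}_r$-sparse operators. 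The principal difficulty is quantifying how the sparsity constants of $\mathcal{S}^\#_{m,j}$ degrade under the $2^m$-dilation and checking this degradation is summable against $2^{-m\delta}$; this is where the combinatorial bookkeeping must be done carefully.
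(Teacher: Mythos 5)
Your proposal follows essentially the same route as the paper's proof: combine Theorem \ref{lmoes} with Lemma \ref{oscest}, absorb the tail $\sum_{m\ge 0}2^{-m\delta}\mathcal{T}_{\mathcal S,m}f$ into sparse averages by regrouping the dilated cubes $2^mQ$ inside cubes of the $3^n$ shifted dyadic grids with a loss linear in $m$ (this is exactly the estimate $\sup_{\mathcal S\subset\mathcal D}\|\mathcal T_{\mathcal S,m}f\|_{L^p(w)}\lesssim m\sup_{\mathscr D,\mathcal S}\|\mathcal T_{\mathcal S,0}f\|_{L^p(w)}$ that the paper quotes from \cite{L3}), and finish with H\"older's inequality $\mathcal A_{1,\mathcal S}f\le\mathcal A_{r,\mathcal S}f$. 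The only caveat is your ``diagonal extraction'' of a single sparse family giving a.e.\ pointwise domination on all of $\mathbb R^n$: this step is neither rigorous as stated (sparse families attached to different $Q_0^{(k)}$ need not stabilize) nor needed, since the paper instead lets $Q_0$ exhaust each of the $2^n$ quadrants, uses $m_{T^{\mathcal F}f}(Q_0)\to 0$ (from $T^{\mathcal F}f\in S_0$) together with Fatou's lemma, and keeps the supremum over sparse families inside the norm.
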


\begin{proof} Let $Q_0\in {\mathcal D}$. Combining Theorem \ref{lmoes} with Lemma \ref{oscest}, we obtain that there exists a sparse family ${\mathcal S}\subset {\mathcal D}$ such that for a.e. $x\in Q_0$,
\begin{equation}\label{intes}
|T^{\mathcal F}f(x)-m_{T^{\mathcal F}f}(Q_0)|\lesssim \psi(r'){\mathcal A}_{r,\mathcal S}f(x)+\sum_{m=0}^{\infty}\frac{1}{2^{m\d}}{\mathcal T}_{\mathcal S,m}f(x).
\end{equation}

If $T^{\mathcal F}f\in S_0$, then $m_{T^{\mathcal F}f}(Q)\to 0$ as $|Q|\to \infty$. Hence, letting $Q_0$ to anyone of $2^n$ quadrants and
using (\ref{intes}) along with Fatou's lemma, we get
$$\|T^{\mathcal F}f\|_{L^p(w)}\lesssim \psi(r')\sup_{{\mathcal S}\subset {\mathcal D}}\|{\mathcal A}_{r, \mathcal S}f\|_{L^p(w)}+
\sum_{m=0}^{\infty}\frac{1}{2^{m\d}}\sup_{{\mathcal S}\subset {\mathcal D}}\|{\mathcal T}_{\mathcal S,m}f\|_{L^p(w)}.$$
It was shown in \cite{L3} that
$$\sup_{{\mathcal S}\subset {\mathcal D}}\|{\mathcal T}_{\mathcal S,m}f\|_{L^p(w)}\lesssim
m\sup_{{\mathscr{D}},{\mathcal S}}\|{\mathcal T}_{\mathcal S,0}f\|_{L^p(w)}.
$$
Next, by H\"older's inequality,
$$\|{\mathcal T}_{\mathcal S,0}f\|_{L^p(w)}\lesssim \|{\mathcal A}_{r,\mathcal S}f\|_{L^p(w)}.$$
Combining this with the two previous estimates completes the proof.
\end{proof}

\begin{remark}\label{rem}
Observe that the implicit constant in (\ref{rightA}) depends only on $T^{\mathcal F}$ and $n$. In fact,
one can replace $L^p(w)$ in this inequality by an arbitrary Banach function space $X$, exactly the same as for standard Calder\'on-Zygmund operators (see \cite{L3}).
\end{remark}

\section{Proof of Theorem \ref{mainr}, part (i)}\label{sect3}
For $s>0$ let $M_sf(x)=M(|f|^s)(x)^{1/s}$, where $M$ is the Hardy-Littlewood maximal operator.
We will use several results from \cite{LOP1} which can be summarized as follows
(note that part (ii) of Proposition \ref{sum} is contained in the proof of \cite[Lemma 3.3]{LOP1}).

\begin{prop}\label{sum} The following estimates hold:
\begin{enumerate}
\renewcommand{\labelenumi}{(\roman{enumi})}
\item if $w\in A_1$ and $s_w=1+\frac{1}{2^{n+1}[w]_{A_1}}$, then
$$M_{s_{w}}w(x)\le 2[w]_{A_1}w(x);$$
\item for any $p>1$ and $1<s<2$,
$$\|Mf\|_{L^{p'}((M_sw)^{-\frac{1}{p-1}})}\le c(n)p\Big(\frac{1}{s-1}\Big)^{1-1/ps}\|f\|_{L^{p'}(w^{-\frac{1}{p-1}})}.$$
\end{enumerate}
\end{prop}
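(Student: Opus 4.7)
Writing $\langle w\rangle_Q = |Q|^{-1}\int_Q w\,dx$, my plan is to combine the defining $A_1$ bound $\langle w\rangle_Q \le [w]_{A_1}\inf_Q w$ with the sharp reverse H\"older inequality
\begin{equation*}
\left(\frac{1}{|Q|}\int_Q w^{s_w}\,dx\right)^{1/s_w}\le 2\langle w\rangle_Q,\qquad s_w = 1+\frac{1}{2^{n+1}[w]_{A_1}},
\end{equation*}
valid for every cube $Q$. Chaining the two gives $\bigl(|Q|^{-1}\int_Q w^{s_w}\bigr)^{1/s_w} \le 2[w]_{A_1}w(x)$ for a.e.\ $x\in Q$; taking the supremum over cubes $Q\ni x$ then yields (i). The reverse H\"older inequality itself I would prove by a Calder\'on--Zygmund stopping-time argument: iteratively decompose $w$ on $Q$ at geometrically increasing heights $\lambda_k = (2^n)^k \langle w\rangle_Q$, producing families $\mathcal F_k$ of pairwise disjoint maximal dyadic subcubes. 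On any $F\in\mathcal F_k$, the stopping inequality $\lambda_k < |F|^{-1}\int_F w$ combined with the $A_1$ hypothesis $|F|^{-1}\int_F w\le [w]_{A_1}\inf_F w$ forces, after iteration, a geometric decay $\bigl|\bigcup_{F\in\mathcal F_k}F\bigr|\le (2[w]_{A_1})^{-k}|Q|$. Plugging this into the layer-cake formula for $\int_Q w^{s_w}\,dx$ and summing the resulting geometric series --- the choice of $s_w$ is exactly what makes the series converge to at most $2^{s_w}\langle w\rangle_Q^{s_w}|Q|$ --- completes the proof.

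\textbf{Plan for Part (ii).} This is a genuine two-weight inequality for $M$ between the weights $v = w^{-1/(p-1)}$ and $u = (M_sw)^{-1/(p-1)}$; note that $u \le v$ pointwise, since $M_sw\ge w$ a.e. My plan is to dominate $Mf$ by a sparse operator $\sum_{Q\in\mathcal S}\langle|f|\rangle_Q \chi_Q$ (a standard sparse bound for $M$) and reduce (ii) to a testing-type inequality for the pair $(v,u)$ over sparse families. The pointwise lower bound $M_sw(y)\ge \langle w^s\rangle_Q^{1/s}$ for $y\in Q$ converts $Q$-averages of $u$ into negative powers of $\langle w^s\rangle_Q$. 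Two applications of H\"older's inequality --- the first moving $|f|$ from $L^1$ into $L^{p'}(v)$, the second comparing $\langle w\rangle_Q$ with $\langle w^s\rangle_Q^{1/s}$ via the dual H\"older pair $(ps,(ps)')$ --- should then produce the stated constant. An equivalent route is via the Fefferman--Stein inequality $\|Mf\|_{L^{p'}(u)}\lesssim p\|f\|_{L^{p'}(Mu)}$, reducing the problem to the pointwise bound $Mu \lesssim (s-1)^{-p'(1-1/(ps))}v$.

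The main obstacle will be pinpointing the exact $s$-dependence. The exponent $1-1/(ps) = (ps-1)/(ps)$ is the conjugate H\"older exponent of $(ps)'$ and arises naturally from the combination of the two H\"older applications, but any suboptimal choice of exponents would produce the larger (and insufficient) $(s-1)^{-1/p'}$ or $(s-1)^{-1}$ losses. I therefore plan to budget the exponents explicitly at each step, tracking conjugate pairs throughout; as a consistency check, I would also verify Sawyer's two-weight testing condition for $M$ with respect to $(v,u)$, which yields the same constant by a more cumbersome but conceptually similar path.
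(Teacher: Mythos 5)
First, a point of reference: the paper does not prove Proposition 3.1 at all --- it is quoted from \cite{LOP1}, with part (ii) extracted from the proof of \cite[Lemma 3.3]{LOP1} --- so your proposal must be measured against that argument. Your plan for (i) is essentially the standard one behind the cited result: the sharp reverse H\"older inequality with exponent $s_w=1+\frac{1}{2^{n+1}[w]_{A_1}}$, obtained by a Calder\'on--Zygmund stopping-time/absorption argument, chained with $\langle w\rangle_Q\le [w]_{A_1}\,w(x)$ a.e.\ on $Q$ and a supremum over $Q\ni x$. Apart from the decay rate you assert (with heights $(2^n)^k\langle w\rangle_Q$ the Lebesgue measures of the stopping families need not decay like $(2[w]_{A_1})^{-k}$; the usual bookkeeping is done on $\int_Q (M^d_Qw)^{s_w-1}w\,dx$ with an absorption step, and that is where the choice of $s_w$ enters), this part is sound and matches the known proof.

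Part (ii), however, has a genuine gap. Your ``equivalent route'' is false: Fefferman--Stein reduces (ii) to the pointwise bound $M\big((M_sw)^{-\frac{1}{p-1}}\big)\lesssim_{n,p,s} w^{-\frac{1}{p-1}}$, and this fails for general weights (and (ii) is stated for arbitrary $w$). Take $w=\varepsilon^{-n}\chi_{B(0,\varepsilon)}$: then $M_sw(y)\simeq \varepsilon^{-n\frac{s-1}{s}}|y|^{-n/s}$ for $\varepsilon\lesssim|y|\lesssim1$, so the average of $(M_sw)^{-\frac{1}{p-1}}$ over the unit cube is $\simeq \varepsilon^{\frac{n(s-1)}{s(p-1)}}$, while $w^{-\frac{1}{p-1}}=\varepsilon^{\frac{n}{p-1}}$ on $B(0,\varepsilon)$; the ratio blows up like $\varepsilon^{-\frac{n}{s(p-1)}}$. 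The sparse route as you describe it does not close either: the first H\"older (with the pair $(p',p)$) gives $\langle|f|\rangle_Q\le\langle|f|^{p'}w^{-\frac{1}{p-1}}\rangle_Q^{1/p'}\langle w\rangle_Q^{1/p}$, and after $u(Q)\le|Q|\,\langle w^s\rangle_Q^{-(p'-1)/s}$ the comparison of $\langle w\rangle_Q$ with $\langle w^s\rangle_Q^{1/s}$ only says the remaining ratio is $\le1$, leaving $\sum_{Q\in\mathcal S}\langle|f|^{p'}w^{-\frac{1}{p-1}}\rangle_Q|Q|$, which sparseness cannot sum (it would amount to $M:L^1\to L^1$). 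Indeed no sequence of H\"older applications alone can produce an $(s-1)$ factor. In the actual argument (P\'erez's bump argument, which is what runs inside \cite[Lemma 3.3]{LOP1}), one applies H\"older at the level of $\langle|f|\rangle_Q$ with the pair $\big((ps)',ps\big)$ to the splitting $|f|=(|f|w^{-1/p})\,w^{1/p}$, so that $\langle w^s\rangle_Q^{1/(ps)}$ cancels exactly against $\langle u\rangle_Q^{1/p'}$, and the constant $c(n)\,p\,(s-1)^{-(1-\frac{1}{ps})}$ is then precisely the $L^{p'}$ operator norm of the auxiliary maximal operator $M_{(ps)'}$, i.e.\ $\|M\|_{L^{p'/(ps)'}}^{1/(ps)'}\simeq\big(\frac{ps-1}{s-1}\big)^{1-\frac{1}{ps}}$ --- an ingredient absent from your plan. (Also, your side remark is backwards: since $0<s-1<1$, $(s-1)^{-1/p'}$ is \emph{smaller} than the stated $(s-1)^{-(1-1/(ps))}$, hence would be more than sufficient; only a loss like $(s-1)^{-1}$ is too large for the intended application with $s=s_w$.)
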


We also recall the Fefferman-Stein inequality \cite{FS}:
\begin{equation}\label{fs}
\|Mf\|_{L^p(w)}\le c(n)p'\|f\|_{L^p(Mw)}\quad (1<p<\infty),
\end{equation}
and the Coifman inequality \cite{CR}:
\begin{equation}\label{coif}
M\Big((Mf)^{\a}\Big)(x)\le c(n)\frac{1}{1-\a}Mf(x)^{\a}\quad(0<\a<1).
\end{equation}

\begin{proof}[Proof of Theorem \ref{mainr}, part (i)]
By extrapolation (\cite[Cor. 4.3.]{D}), it suffices to consider only the case $q=1$. Hence, our aim is to show
that for any $1<p<\infty$,
$$\|T^{\mathcal F}f\|_{L^p(w)}\le c(n,T^{\mathcal F})pp'\psi(3p')[w]_{A_1}\|f\|_{L^p(w)}.$$
By Lemma \ref{kest} (see also Remark \ref{rem}), this will be a consequence of
\begin{equation}\label{a1norm}
\inf_{1<r\le 2}\Big\{\psi(r')\sup_{{\mathscr{D}},{\mathcal S}}\|{\mathcal A}_{r, \mathcal S}f\|_{L^p(w)}\Big\}
\le c(n)pp'\psi(3p')[w]_{A_1}\|f\|_{L^p(w)}.
\end{equation}

In order to prove (\ref{a1norm}), we obtain the following estimate: for any $1<p<\infty$ and $1<r<\min(2,\frac{p+1}{2})$,
\begin{equation}\label{estar}
\sup_{{\mathscr{D}},{\mathcal S}}\|{\mathcal A}_{r, \mathcal S}f\|_{L^p(w)}\le c(n)\Big(\Big(\frac{p+1}{2r}\Big)'\Big)^{1/r}p[w]_{A_1}\|f\|_{L^p(w)}.
\end{equation}
Assuming for a moment that  (\ref{estar}) holds true, (\ref{a1norm}) follows easily, since
\begin{equation}\label{inf1r}
\inf_{1<r\le 2}\psi(r')\Big(\Big(\frac{p+1}{2r}\Big)'\Big)^{1/r}\le cp'\psi(3p')
\end{equation}
for some absolute $c>0$. To get (\ref{inf1r}), observe that in the case $p\ge 3$ one can take
$r=3/2$, namely,
$$
\inf_{1<r\le 2}\psi(r')\Big(\Big(\frac{p+1}{2r}\Big)'\Big)^{1/r}\le \psi(3)\Big(\Big(\frac{p+1}{3}\Big)'\Big)^{2/3}\le c\psi(3p').
$$
If $p<3$, we take $r=\frac{p+3}{4}$, and then
\begin{eqnarray*}
\inf_{1<r\le 2}\psi(r')\Big(\Big(\frac{p+1}{2r}\Big)'\Big)^{1/r}&\le& \psi\Big(\frac{p+3}{4(p-1)}\Big)\Big(\Big(\frac{2(p+1)}{p+3}\Big)'\Big)^{\frac{4}{p+3}}\\
&\le& c\frac{1}{p-1}\psi(2p').
\end{eqnarray*}
Combining both cases yields (\ref{inf1r}).

We turn to the proof of (\ref{estar}).
Fix a dyadic grid ${\mathscr{D}}$ and a sparse family ${\mathcal S}\subset {\mathscr{D}}$. One can assume that $f\ge 0$.
We linearize the operator ${\mathcal A}_{r, \mathcal S}$ as follows. For any $Q\in {\mathcal S}$ there exists $g_{(Q)}$ supported in $\bar Q$ such that
$\frac{1}{|\bar Q|}\int_{\bar Q}g_{(Q)}^{r'}=1$ and
$$
\left(\frac{1}{|\bar Q|}\int_{\bar Q}f^r\right)^{1/r}=\frac{1}{|\bar Q|}\int_{\bar Q}fg_{(Q)}.
$$
Define now the linear operator $L$ by
$$L(h)(x)=\sum_{Q\in {\mathcal S}}\left(\frac{1}{|\bar Q|}\int_{\bar Q}hg_{(Q)}\right)\chi_Q(x).$$
Then $L(f)={\mathcal A}_{r, \mathcal S}(f)$, and hence, in order to prove (\ref{estar}), it suffices to show that
\begin{equation}\label{suff}
\|L(h)\|_{L^p(w)}\le c(n)\Big(\Big(\frac{p+1}{2r}\Big)'\Big)^{1/r}p[w]_{A_1}\|h\|_{L^p(w)},
\end{equation}
uniformly in $g_{(Q)}$.

Exactly as it was done in \cite{LOP1}, we have that (\ref{suff}) will follow from
\begin{equation}\label{suff1}
\|L(h)\|_{L^p(w)}\le c(n)\Big(\Big(\frac{p+1}{2r}\Big)'\Big)^{1/r}p\Big(\frac{1}{s-1}\Big)^{1-1/ps}\|h\|_{L^p(M_sw)},
\end{equation}
where $1<s<2$. Indeed, taking here $s=s_w=1+\frac{1}{2^{n+1}[w]_{A_1}}$, by (i) of Proposition \ref{sum},
$$
\Big(\frac{1}{s_w-1}\Big)^{1-1/ps_w}\|h\|_{L^p(M_{s_w}w)}\le c(n)[w]_{A_1}\|h\|_{L^p(w)},
$$
which yields (\ref{suff}).

Let $L^*$ denote the formal adjoint of $L$. By duality, (\ref{suff1}) is equivalent to
$$
\|L^*(h)\|_{L^{p'}((M_sw)^{-\frac{1}{p-1}})}\le
c(n)\Big(\big(\textstyle\frac{p+1}{2r}\big)'\Big)^{1/r}p\Big(\textstyle\frac{1}{s-1}\Big)^{1-1/ps}\|h\|_{L^{p'}(w^{-\frac{1}{p-1}})},
$$
which, by (ii) of Proposition \ref{sum}, is an immediate corollary of
\begin{equation}\label{suff2}
\|L^*(h)\|_{L^{p'}((M_sw)^{-\frac{1}{p-1}})}\le c(n)\Big(\Big(\frac{p+1}{2r}\Big)'\Big)^{1/r}\|Mh\|_{L^{p'}((M_sw)^{-\frac{1}{p-1}})}.
\end{equation}

We now prove (\ref{suff2}).
By duality, pick $\eta\ge 0$ such that $\|\eta\|_{L^p(M_sw)}=1$ and
$$\|L^*(h)\|_{L^{p'}((M_sw)^{-\frac{1}{p-1}})}=\int_{{\mathbb R}^n}L^*(h)\eta dx=\int_{{\mathbb R}^n}hL(\eta) dx.$$
Using H\"older's inequality and the sparseness of ${\mathcal S}$, we obtain
\begin{eqnarray*}
&&\int_{{\mathbb R}^n}hL(\eta) dx=\sum_{Q\in {\mathcal S}}\left(\frac{1}{|\bar Q|}\int_{\bar Q}\eta g_{(Q)}\right)\int_Qh\le
\sum_{Q\in {\mathcal S}}\left( \textstyle\frac{1}{|\bar Q|}\int_{\bar Q}\eta^r\right)^{1/r}\int_Qh\\
&&\le(2\sqrt n)^n\sum_{Q\in {\mathcal S}}\left(\frac{1}{|\bar Q|}\int_{\bar Q}\eta^r\right)^{1/r}\left(\frac{1}{|\bar Q|}\int_{\bar Q}h\right)|Q|\\
&&\le 2(2\sqrt n)^n \sum_{Q\in {\mathcal S}}
\left(\frac{1}{|\bar Q|}\int_{\bar Q}\Big((Mh)^{\frac{1}{p+1}}\eta\Big)^r\right)^{1/r}
\left(\frac{1}{|\bar Q|}\int_{\bar Q}h\right)^{\frac{p}{p+1}}|E(Q)|\\
&&\le 2(2\sqrt n)^n\sum_{Q\in {\mathcal S}}\int_{E(Q)}M_{r}((Mh)^{\frac{1}{p+1}}\eta)(Mh)^{\frac{p}{p+1}}dx\\
&&\le 2(2\sqrt n)^n \int_{{\mathbb R}^n}M_{r}((Mh)^{\frac{1}{p+1}}\eta)(Mh)^{\frac{p}{p+1}}dx.
\end{eqnarray*}
Next, by H\"older's inequality with the exponents $\rho=\frac{p+1}{2}$ and $\rho'=\frac{p+1}{p-1}$,
\begin{eqnarray*}
&&\int_{{\mathbb R}^n}M_{r}((Mh)^{\frac{1}{p+1}}\eta)(Mh)^{\frac{p}{p+1}}dx\\
&&=\int_{{\mathbb R}^n}M_{r}((Mh)^{\frac{1}{p+1}}\eta)(M_sw)^{\frac{1}{p+1}}(Mh)^{\frac{p}{p+1}}(M_sw)^{-\frac{1}{p+1}}dx\\
&&\le \|M_{r}((Mh)^{\frac{1}{p+1}}\eta)\|_{L^{\frac{p+1}{2}}((M_sw)^{1/2})}\|Mh\|_{L^{p'}((M_sw)^{-\frac{1}{p-1}})}^{\frac{p}{p+1}}.
\end{eqnarray*}
Combining (\ref{fs}) and (\ref{coif}) yields
\begin{eqnarray*}
&&\|M_{r}((Mh)^{\frac{1}{p+1}}\eta)\|_{L^{\frac{p+1}{2}}((M_sw)^{1/2})}\\
&&\le c(n)\Big(\Big(\frac{p+1}{2r}\Big)'\Big)^{1/r}
\|(Mh)^{\frac{1}{p+1}}\eta\|_{L^{\frac{p+1}{2}}(M(M_sw)^{1/2})}\\
&&\le c(n)\Big(\Big(\frac{p+1}{2r}\Big)'\Big)^{1/r}\|(Mh)^{\frac{1}{p+1}}\eta\|_{L^{\frac{p+1}{2}}((M_sw)^{1/2})}.
\end{eqnarray*}
Using again H\"older's inequality with $\rho=2p'$ and $\rho'=\frac{2p}{p+1}$, we get
\begin{eqnarray*}
&&\|(Mh)^{\frac{1}{p+1}}\eta\|_{L^{\frac{p+1}{2}}((M_sw)^{1/2})}\\
&&=\left(\int_{{\mathbb R}^n}\Big((Mh)^{\frac{1}{2}}(M_sw)^{-\frac{1}{2p}}\Big)\Big(\eta^{\frac{p+1}{2}}(M_sw)^{\frac{p+1}{2p}}\Big)dx\right)^{\frac{2}{p+1}}\\
&&\le \|Mh\|_{L^{p'}((M_sw)^{-\frac{1}{p-1}})}^{\frac{1}{p+1}}\|\eta\|_{L^p(M_sw)}=\|Mh\|_{L^{p'}((M_sw)^{-\frac{1}{p-1}})}^{\frac{1}{p+1}}.
\end{eqnarray*}
Combining this estimate with the three previous ones yields (\ref{suff2}), and therefore the theorem is proved.
\end{proof}

\begin{remark}\label{rem5}
The key ingredient in the proof of the linear $[w]_{A_1}$ bound for usual Calder\'on-Zygmund operators $T$ in \cite{LOP1,LOP2}
is a Coifman-type estimate relating the adjoint operator $T^*$ and the Hardy-Littlewood maximal operator $M$. This method relies crucially on the fact that
that $T^*$ is essentially the same operator as $T$. However, this is not the case with the Carleson operator ${\mathcal C}$. Indeed,
taking an arbitrary measurable function $\xi(\cdot)$, we can consider the standard linearization of ${\mathcal C}$ given by
$${\mathcal C}_{\xi(\cdot)}(f)(x)=H({\mathcal M}^{\xi(x)}f)(x).$$
It is difficult to expect that its adjoint ${\mathcal C}_{\xi(\cdot)}^*$ can be related (uniformly in $\xi(\cdot)$) with $M$ (or even with a bigger maximal operator)
either via good-$\lambda$ or by a sharp function estimate. Indeed, such a relation would imply that $\|{\mathcal C}_{\xi(\cdot)}^*\|_{L^p}\lesssim p$
as $p\to\infty$ (since $\|f\|_{L^p}\lesssim p\|f^{\#}\|_{L^p}$ as $p~\to~\infty$, where $f^{\#}$ is the sharp function), which in turn means that $\|{\mathcal C}\|_{L^p} \lesssim \frac{1}{p-1}$ as $p\to 1$. Such a result, significantly stronger than   the best known dependence of $\|{\mathcal C}\|_{L^p}$ as $p\to 1$ (which can be read from eq.\ \eqref{strongboundlie} below),  would entail in particular that ${\mathcal C}:L\log L(\mathbb T) \to L^1(\mathbb T)$ just like the Hilbert transform. This bound is   a much stricter form of the conjectured \eqref{theconjemb},  and is not even known to hold  for the tamer  lacunary Carleson operator $\mathcal C_{lac}$. See Section \ref{sect6} for further discussion.

The operator $L^*$ defined in the proof can be viewed as a dyadic positive model of ${\mathcal C}_{\xi(\cdot)}^*$, and inequality
(\ref{suff2}) is a Coifman type estimate relating $L^*$ and $M$. By the reasons described above we cannot apply
the approach used in \cite{LOP1,LOP2} directly to (\ref{suff2}).
\end{remark}

\section{Proof of Theorem \ref{mainr}, part (ii)} \label{sect4}
We start with some preliminaries. Given a sparse family ${\mathcal S}$, define the operator ${\mathcal T}_{\mathcal S}$ by
$$
{\mathcal T}_{\mathcal S}f(x)=\sum_{Q\in {\mathcal S}}\left(\frac{1}{|\bar Q|}\int_{\bar Q}|f|\right)\chi_Q(x).
$$
This operator satisfies
\begin{equation}\label{cmp}
\|{\mathcal T}_{\mathcal S}\|_{L^p(w)}\lesssim [w]_{A_p}^{\max(1,\frac{1}{p-1})} \quad(1<p<\infty).
\end{equation}
For the fully dyadic version ${\mathcal T}_{\mathcal S,0}$ (introduced in Section 2) this estimate was proved in \cite{CMP}. The same proof with
minor modifications works for ${\mathcal T}_{\mathcal S}$ as well. Alternatively, one can use that
$$\|{\mathcal T}_{\mathcal S}\|_{L^p(w)}\lesssim \sup_{{\mathscr{D}},{\mathcal S}}\|{\mathcal T}_{\mathcal S,0}f\|_{L^p(w)},$$
proved in \cite{L3}, and subsequently apply the result for ${\mathcal T}_{\mathcal S,0}$.

Furthermore,  we recall that (see \cite{B})
\begin{equation}\label{buck}
\|M\|_{L^p(w)}\lesssim [w]_{A_p}^{\frac{1}{p-1}}\quad(1<p<\infty).
\end{equation}
Also, it is mentioned in \cite{B} (a detailed proof can be found in \cite{HPR}) that
\begin{equation}\label{bhpr}
\e=c(n,p)[w]_{A_p}^{1-p'}\Rightarrow\,\,[w]_{A_{p-\e}}\lesssim [w]_{A_p}.
\end{equation}

We will use the following proposition.
\begin{prop}\label{pr} Let $r>1$. Then for any cube $Q$,
$$\Big(\frac{1}{|Q|}\int_Q|f|^rdx\Big)^{1/r}\le \frac{1}{|Q|}\int_Q|f|dx+2^n(r-1)\frac{1}{|Q|}\int_QM_r(f\chi_Q)dx.$$
\end{prop}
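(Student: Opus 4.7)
The plan is to reduce the inequality to a Calderón-Zygmund-type estimate via a concavity argument, then use the defining pointwise property of $M_r$ to control the residual contribution. After normalizing $f \geq 0$ and setting $\lambda = \tfrac{1}{|Q|}\int_Q f\,dx$, we may assume $\lambda > 0$ (the other case is trivial).

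First, I would apply the concavity inequality $t^{1/r} \leq 1 + (t-1)/r$, valid for $t \geq 1$, to $t = (f^r)_Q/\lambda^r$ (which satisfies $t \geq 1$ by Jensen). This gives
\[
\Big(\tfrac{1}{|Q|}\int_Q f^r\,dx\Big)^{1/r} \leq \lambda + \frac{(f^r)_Q - \lambda^r}{r\,\lambda^{r-1}}.
\]
Using $\int_Q f\,dx = \lambda|Q|$, one rewrites the numerator algebraically:
\[
(f^r)_Q - \lambda^r = \frac{1}{|Q|}\int_Q f\bigl(f^{r-1}-\lambda^{r-1}\bigr)\,dx,
\]
whose integrand is non-positive on $\{f\leq\lambda\}$. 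So it suffices to establish
\[
\frac{1}{|Q|}\int_{\{f > \lambda\}} f\bigl(f^{r-1}-\lambda^{r-1}\bigr)\,dx \;\leq\; 2^n\,r\,(r-1)\,\lambda^{r-1}\,(M_r(f\chi_Q))_Q.
\]

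Next, I would run a Calderón-Zygmund stopping time on $f\chi_Q$ at level $\lambda$ within the dyadic grid of $Q$, producing a disjoint family $\{R_j\}$ of maximal dyadic subcubes satisfying $\lambda < (f)_{R_j} \leq 2^n\lambda$ and covering $\{f>\lambda\}$ up to a null set. The key pointwise ingredient on each $R_j$ is
\[
M_r(f\chi_Q)(x) \;\geq\; (f^r)_{R_j}^{1/r} \;\geq\; (f)_{R_j} \;>\; \lambda, \qquad x \in R_j,
\]
which follows from the admissibility of $R_j$ as a cube in the supremum defining $M$ and from Jensen. Combined with the concavity bound $f^{r-1}-\lambda^{r-1}\leq (r-1)\lambda^{r-2}(f-\lambda)$ for $r \in (1,2]$ (an analogous pointwise inequality handles $r > 2$), this lets one estimate the per-cube contribution to the integral above and, after summing over the disjoint $R_j$, recover the target bound.

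The main obstacle is the per-cube estimate: a direct application of the pointwise bounds produces either $\int f(f-\lambda)$, which cannot be dominated by $\lambda(M_r(f\chi_Q))_Q$ pointwise, or an $M_r^r$-type term that loses the $(r-1)$ factor. The resolution is an iteration of the Calderón-Zygmund decomposition into principal cubes on each $R_j$, at geometrically growing levels $2^k\lambda$. Each stopping reduces the measure of the bad set by a factor of $2$, so the resulting expansion telescopes against a geometric series $\sum_{k \geq 0} 2^{-k(r-1)}$, and a careful book-keeping produces the $(r-1)$ factor in the required form, with the $M_r(f\chi_Q)$-control coming from the defining bound on the principal cubes.
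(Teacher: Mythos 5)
The decisive problem is that the inequality you reduce to (``it suffices to establish \dots'') is false in general, so no amount of Calder\'on--Zygmund stopping or bookkeeping can complete the argument. Take $n=1$, $Q=[0,1]$ and $f=N\chi_E$ with $E$ an interval of length $\delta$, so that $\lambda=(f)_Q=N\delta$ and $\{f>\lambda\}=E$. Then the left-hand side of your target is $N(N^{r-1}-\lambda^{r-1})\delta\simeq N^r\delta$, while $(M_r(f\chi_Q))_Q\simeq_r N\delta^{1/r}$, so the right-hand side is $\simeq_r \lambda^{r-1}N\delta^{1/r}=N^r\delta^{\,r-1+\frac1r}$. Since $r-1+\frac1r>1$ for every $r>1$, the ratio of the two sides blows up like $\delta^{-(r-1)^2/r}$ as $\delta\to0$. (The Proposition itself survives this example only because $(f^r)_Q^{1/r}=N\delta^{1/r}$ is directly comparable to $(r-1)(M_r(f\chi_Q))_Q$.) The loss is created by your concavity step: linearizing $t\mapsto t^{1/r}$ at $\lambda^r=(f)_Q^{\,r}$ rather than at $(f^r)_Q$ replaces $(f^r)_Q^{1/r}$ by $\lambda+\frac{(f^r)_Q-\lambda^r}{r\lambda^{r-1}}$, which in the example is of size $N\delta^{2-r}\gg N\delta^{1/r}$; the statement you are left to prove overshoots the truth by an unbounded factor. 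Two further points, though they are secondary to this: $\sum_{k\ge0}2^{-k(r-1)}\simeq (r-1)^{-1}$, so the geometric series you invoke points in the wrong direction for ``producing'' a factor $(r-1)$; and for $r>2$ the bound $f^{r-1}-\lambda^{r-1}\le(r-1)\lambda^{r-2}(f-\lambda)$ reverses, since $t\mapsto t^{r-1}$ is then convex.

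For comparison, the paper's proof avoids any comparison at the level of the $L^1$ average. One normalizes $\frac{1}{|Q|}\int_Q|f|^r=1$ and uses Stein's local estimate $\frac{1}{\a}\int_{\{x\in Q:|f|>\a\}}|f|\,dx\le 2^n|\{x\in Q: M(f\chi_Q)>\a\}|$, valid as soon as $\a$ dominates the average of $|f|$ over $Q$, but applied with $|f|$ replaced by $|f|^r$ and $\a$ by $\a^r$; integrating in $\a$ over $[1,\infty)$ gives $\int_Q(|f|^r-|f|)\,dx\le 2^n(r-1)\int_Q M_r(f\chi_Q)\,dx$, where the factor $r-1$ comes from $\int_1^\infty \a^{-r}d\a$, and this is exactly the claim after undoing the normalization. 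The crucial structural difference is that the comparison point is the $L^r$ average of $f$ (equal to $1$ after normalization), never $\lambda=(f)_Q$, which is what keeps the estimate from degenerating on spike-like functions. If you want to salvage your scheme, you should linearize at $(f^r)_Q$ (equivalently, normalize as in the paper) and aim at bounding $\int_Q(|f|^r-|f|)$, at which point the layer-cake/stopping argument you describe essentially becomes the paper's proof.
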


\begin{proof}
By homogeneity, one can assume that $\frac{1}{|Q|}\int_Q|f|^r=1$. We use the classical estimate (see \cite{St}) saying that for $|f|_Q\le \a$,
$$\frac{1}{\a}\int_{\{x\in Q:|f(x)|>\a\}}|f(x)|dx\le 2^n|\{x\in Q:M(f\chi_Q)>\a\}|.$$
Replacing here $|f|$ by $|f|^r$ and $\a$ by $\a^r$, we obtain that for $\a\ge 1$,
$$\frac{1}{\a^r}\int_{\{x\in Q:|f(x)|>\a\}}|f(x)|^rdx\le 2^n|\{x\in Q:M_r(f\chi_Q)>\a\}|.$$
From this,
\begin{eqnarray*}
\frac{1}{r-1}\int_{\{x\in Q:|f|>1\}}(|f|^r-|f|)dx&=&\int_1^{\infty}\frac{1}{\a^r}\int_{\{x\in Q:|f(x)|>\a\}}|f(x)|^rdxd\a\\
&\le& 2^n\int_QM_r(f\chi_Q)dx.
\end{eqnarray*}
Combining this estimate with
$$\int_Q(|f|^r-|f|)dx\le \int_{\{x\in Q:|f|>1\}}(|f|^r-|f|)dx,$$
we obtain
$$\int_Q(|f|^r-|f|)dx\le 2^n(r-1)\int_QM_r(f\chi_Q)dx.$$
Hence,
$$1\le \frac{1}{|Q|}\int_Q|f|dx+2^n(r-1)\frac{1}{|Q|}\int_QM_r(f\chi_Q)dx,$$
which completes the proof.
\end{proof}

\begin{proof}[Proof of Theorem \ref{mainr}, part (ii)]
By Proposition \ref{pr},
$$
{\mathcal A}_{r, \mathcal S}f(x)\le {\mathcal T}_{\mathcal S}f(x)+2^n(r-1){\mathcal T}_{\mathcal S}(M_rf)(x).
$$
Combining this estimate with (\ref{cmp}) and (\ref{buck}) yields
\begin{eqnarray*}
\|{\mathcal A}_{r, \mathcal S}\|_{L^p(w)}&\le& \|{\mathcal T}_{\mathcal S}\|_{L^p(w)}+2^n(r-1)\|{\mathcal T}_{\mathcal S}\|_{L^p(w)}
\|M_r\|_{L^p(w)}\\
&\lesssim& [w]_{A_p}^{\max(1,\frac{1}{p-1})}+2^n(r-1)[w]_{A_p}^{\max(1,\frac{1}{p-1})}[w]_{A_{p/r}}^{\frac{r}{p-r}}.
\end{eqnarray*}
Take $r=\frac{p}{p-\e}$, where $\e$ is given by (\ref{bhpr}). Then we obtain
\begin{eqnarray*}
&&\inf_{1<r\le 2}\psi(r')\Big([w]_{A_p}^{\max(1,\frac{1}{p-1})}+2^n(r-1)[w]_{A_p}^{\max(1,\frac{1}{p-1})}[w]_{A_{p/r}}^{\frac{r}{p-r}}\Big)\\
&&\lesssim \psi\Big(c(p,n)[w]_{A_p}^{\frac{1}{p-1}}\Big)[w]_{A_p}^{\max(1,\frac{1}{p-1})}.
\end{eqnarray*}
Applying Lemma \ref{kest} along with the two previous estimates completes the proof.
\end{proof}

\section{Proof of Theorem~\ref{Walshthm}}\label{secwalsh}
Before the actual proof, we  recall the definition of the Walsh-Carleson maximal  operator $\W$. For a nonnegative integer $n$ with dyadic representation $ n= \sum_{j=0}^\infty n_j 2^j,$ we introduce the $n$-th Walsh character by
$$
W_n(x) = \prod_{j=0}^\infty r_j(x)^{n_j}, \qquad x \in \mathbb T\equiv [0,1],$$
where $r_j(x)=\mathrm{sign}\sin (2^j\pi x)$ is the $j$-th Rademacher function. The Walsh characters $\{W_n:n\in \mathbb N\}$ form an orthonormal basis of $L^2(\mathbb T)$. For $f \in L^1(\mathbb T)$, the $n$-th partial Walsh-Fourier sum of $f$ is  \begin{equation} \label{walshseries}
\W_n f(x):= \sum_{k=0}^n \langle f, W_k \rangle W_k (x), \qquad x\in \mathbb{T},
\end{equation}
and  the Walsh-Carleson maximal operator is thus defined by
\begin{equation} \label{walshcarl}
\W f(x)= \sup_{n \in \mathbb N} |\W_n f(x)|, \qquad x\in \mathbb{T}.
\end{equation}

The remainder of this section is devoted to the proof of the inequality below:   for an arbitrary weight   $w$ and $1<p<\infty$,
\begin{equation}
\|\W f\|_{L^p(w)}\lesssim  \inf_{1<r\le 2}\Big\{ r' \sup_{{\mathcal S}}\|{\mathcal A}_{r, \mathcal S}f\|_{L^p(w)}\Big\},
\label{wfinal}
\end{equation}
the supremum being taken over all sparse families of dyadic cubes ${\mathcal S}\subset {\mathcal D}(\mathbb T)$;
in this fully dyadic context, we have redefined
$$
{\mathcal A}_{r, \mathcal S}f(x)=\sum_{Q\in {\mathcal S}}\left(\frac{1}{|  Q|}\int_{  Q}|f|^r\right)^{1/r}\chi_Q(x).
$$
With inequality \eqref{wfinal} in hand, Theorem \ref{Walshthm} follows by essentially repeating the proof given in Sections \ref{sect3} and \ref{sect4} for Theorem \ref{mainr}.  The first step towards \eqref{wfinal} is   a (simpler) substitute of Lemma \ref{oscest}.
\begin{lemma} \label{lemma22walsh}
Let $Q\subset \mathbb T$ be a dyadic interval. Then for all $1<r<2$
\begin{eqnarray}
\o_{\la}(\W f;Q)&\lesssim  r'\left(\frac{1}{|  Q|}\int_{Q}|f|^r\right)^{1/r}.\label{oscw}\end{eqnarray}
\end{lemma}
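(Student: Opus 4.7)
The plan is to split $f = f_1 + f_2$ with $f_1 = f\chi_Q$ and $f_2 = f - f_1$, and to establish the \emph{Walsh localization} property: $\W f_2$ is constant on the dyadic interval $Q$. This is the Walsh counterpart of the kernel-tail estimate of Lemma~\ref{oscest}, except that the tail vanishes \emph{exactly} rather than decaying geometrically, reflecting the absence of tails for Walsh characters. Once localization is granted, let $c$ denote the common value of $\W f_2$ on $Q$. Since $\W$ is the supremum of absolute values of linear operators in $f$, an iterated reverse triangle inequality yields $|\W f - \W f_2|\le \W f_1$ pointwise, hence $|\W f - c|\le \W(f\chi_Q)$ on $Q$. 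Plugging this $c$ into the definition of $\o_\la(\W f;Q)$ and invoking the a priori weak-$(r,r)$ bound \eqref{cond1walsh}, one obtains
$$\o_\la(\W f;Q)\le \bigl(\W(f\chi_Q)\bigr)^*(\la|Q|)\lesssim \frac{r'\|f\chi_Q\|_{L^r}}{(\la|Q|)^{1/r}}\lesssim r'\Big(\frac{1}{|Q|}\int_Q|f|^r\Big)^{1/r},$$
since $\la$ is a fixed constant depending only on the ambient dimension.

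To verify localization, let $k$ be the level of $Q$ (so $|Q|=2^{-k}$), and let $y_Q:Q\to[0,1)$ be the canonical affine identification. The key factorization is: for $x\in Q$ and $n=2^km+j$ with $0\le j<2^k$,
$$W_n(x)=\epsilon_j(Q)\,W_m(y_Q(x)),\qquad \epsilon_j(Q):=W_j(x_Q)\in\{\pm 1\}.$$
Grouping the indices in $\W_N f_2(x)=\sum_{n=0}^N\langle f_2,W_n\rangle W_n(x)$ by the ``high part'' $m=\lfloor n/2^k\rfloor$, and writing $N=2^kM+R$ with $0\le R<2^k$, one computes for $x\in Q$
$$\W_N f_2(x)=\W^{\star}_{M-1}\bigl((f_2)_Q\bigr)(y_Q(x))+W_M(y_Q(x))\sum_{j=0}^{R}\epsilon_j(Q)\langle f_2,W_{2^kM+j}\rangle,$$
where $\W^\star$ denotes the Walsh partial sum on $[0,1)$ and $(f_2)_Q$ is $f_2$ transported to $[0,1)$ via $y_Q^{-1}$; the derivation combines the factorization above with the character-orthogonality identity $\sum_{j=0}^{2^k-1}\epsilon_j(Q)\epsilon_j(Q')=2^k\d_{Q,Q'}$, which makes the low-frequency coefficient bundles sensitive only to the restriction of $f_2$ to $Q$. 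Since $(f_2)_Q\equiv 0$, the first summand vanishes; the second is an $x$-independent scalar times $W_M(y_Q(x))\in\{\pm 1\}$, so $|\W_N f_2(x)|$ is constant on $Q$, and so is $\W f_2=\sup_N |\W_N f_2|$. (The case $N<2^k$ is immediate, since every $W_n$ with $n\le N$ is already constant on $Q$.)

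The only real obstacle is bookkeeping in the identity above: the indexing through the ``high part'' $m$ and the application of character orthogonality require care, but no deep tools beyond that are invoked. Notably, the argument replaces the quantitative kernel-tail decay exploited in Lemma~\ref{oscest} by an \emph{exact} cancellation of the low-frequency part on $Q$, which is precisely why the bound \eqref{oscw} carries no tail sum on its right-hand side.
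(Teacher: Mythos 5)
Your proposal is correct, and its skeleton coincides with the paper's: split $f=f\chi_Q+f_2$, show that the contribution of $f_2$ to the oscillation on $Q$ vanishes \emph{exactly}, and handle the local part via the weak-$L^r$ bound \eqref{cond1walsh} applied at the fixed level $\la$. The difference lies in how the localization of the $f_2$-part is established. The paper first rewrites $\W$ as a maximally modulated Haar martingale transform (equality \eqref{modelsum2}, which it proves separately using the identity $w_I=b_Ih_IW_n$ from Hyt\"onen--Lacey and Thiele's quartile representation), and then the constancy of each $T_nf_2$ on $Q$ is immediate from the support and constancy properties of the Haar functions $h_I$ with $I\cap Q^c\neq\emptyset$; this keeps the argument formally parallel to Lemma \ref{oscest}, with the Haar structure replacing the kernel-tail estimate. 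You instead work directly with the Walsh--Fourier partial sums: the block factorization $W_{2^km+j}=\epsilon_j(Q)\,W_m\circ y_Q$ on $Q$ together with the scale-$2^{-k}$ Dirichlet-kernel (character orthogonality) identity shows that the completed frequency blocks of $\W_Nf_2$ only see $f_2|_Q\equiv 0$, while the incomplete top block is an $x$-independent scalar times the unimodular factor $W_M(y_Q(x))$, so $|\W_Nf_2|$, hence $\W f_2$, is constant on $Q$; choosing $c$ to be that constant and using $|\W f-\W f_2|\le \W(f\chi_Q)$ gives \eqref{oscw}. Your route is more self-contained (no appeal to \eqref{modelsum2} or the cited Haar--Walsh lemma), at the cost of the block-indexing bookkeeping; the paper's route buys a clean structural statement that is reused verbatim in the oscillation argument and mirrors the Calder\'on--Zygmund case. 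Both yield the same conclusion with the same constants.
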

We postpone the proof of the lemma until after the conclusion of the argument for \eqref{wfinal}. By combining   Theorem \ref{lmoes} applied to $\W f$, with $Q_0=\mathbb T$,   and Lemma \ref{oscest}, we learn   that there exists a sparse family ${\mathcal S}\subset {\mathcal D}(\mathbb T)$ such that for a.e. $x\in \mathbb T$ and $1<r<2$
\begin{equation}\label{intes2}
|\W f(x)-m_{\W f}  (\mathbb T)|\lesssim r' {\mathcal A }_{r, \mathcal S}f(x).
 \end{equation}
Note further that the proof of Theorem \ref{lmoes} naturally yields $Q_0=\mathbb T \in \mathcal S$, so that
\begin{equation}\label{intes3}
|m_{\W f}  (\mathbb T)| \chi_{\mathbb T}(x) \lesssim r' \left(\frac{1}{|  \mathbb T|}\int_{  \mathbb T}|f|^r\right)^{1/r}\chi_{\mathbb T}(x) \leq  r'{\mathcal A}_{r, \mathcal S}f(x)
 \end{equation}
where we applied \eqref{cond1walsh} once again to get the first inequality. The bound \eqref{wfinal} then follows by putting together   \eqref{intes2}-\eqref{intes3}, and arguing as in the proof of Lemma \ref{kest}.

 \begin{proof}[Proof of Lemma \ref{lemma22walsh}]
Here, we will rely on the alternative   representation of $\W f$ as a maximally (Walsh) modulated martingale transform, given by
\begin{align}
\label{modelsum2}
&\W f(x)= \sup_{n \in \mathbb N} \left| T_n f(x)\right|, \\ & \nonumber T_n f(x):=\begin{cases} \W_0 f(x), & n=0, \\ \displaystyle  \sum_{ I \in {\mathcal D}(\mathbb T) } \varepsilon_I \langle (f\cdot W_n), h_I \rangle h_I (x), &n\geq 1,   \end{cases}
\end{align}
 where  $h_I$ is the usual $L^2$-normalized Haar function on $I$, and, for each $n$, $\varepsilon_{I,n} :\mathcal{D}_\mathbb{T}\to \{0,1\}$ is specified below.
The equality   \eqref{modelsum2} is proved at the end of the section; here, we   devote ourselves to \eqref{oscw}. Set $f_1=f\chi_{Q}$ and $f_2=f-f_1$. Let $x\in Q$ and let $x_0$ be the center of $Q$. Then
\begin{eqnarray} \label{ez0}
&&|\W (f)(x)-\W (f_2)(x_0)| =\Big|\sup_{n \in \mathbb N}
|T_n f(x)|-\sup_{n \in \mathbb N }|T_n f_2(x_0)|\Big|\\ \nonumber &&
 \le \sup_{n \in \mathbb N}|T_n f(x)-T_n f_2(x_0)| \\ && \le \W (f_1)(x)+\sup_{n \in \mathbb N}\|T_nf_2(\cdot)-T_n f_2(x_0)\|_{L^{\infty}(Q)}.  \nonumber
\end{eqnarray}
For the local part, by (\ref{cond1walsh}),
$$\big(\W (f_1)\chi_Q\big)^*(\la|Q|)\lesssim r'\left(\textstyle \frac{1}{|  Q|}\int_{  Q}|f|^r\right)^{1/r}.$$
We claim that the second summand in the bottom line of \eqref{ez0} is zero, so that the Lemma follows by combining \eqref{ez0} with the last display.  It is trivial to verify that $T_0f_2(x)=T_0 f_2(x_0)$. Furthermore, for $n \geq 1$,   $$ T_nf_2(x)-T_n f_2(x_0)=
  \sum_{  I \in {\mathcal D}(\mathbb T) } \varepsilon_{I,n} \langle (f_2 W_n),h_I \rangle \big( h_I(x)-h_I(x_0) \big).  $$
and the above sum is also identically zero for $x \in Q$. Indeed, since $f_2$ is supported on $Q^c$ and each $h_I$ is supported on $I$, the sum can be restricted to those $I$ such that $I\cap Q^c \neq \emptyset$. Therefore, either $I \cap Q =\emptyset$, so that both $h_I(x),h_I(x_0)$ vanish; or $I \supsetneq Q$, in which case  $h_I $ is constant on $Q$, and $  h_I(x)-h_I(x_0)=0 $. This finishes the proof of the claim.\end{proof}
\begin{proof}[Proof of the equality \eqref{modelsum2}] Fix $n \geq 1$; let  $\{k_j: j=0,\ldots,J\}$ be the set of those integers $k$  such that $n_k\neq 0$ in the dyadic representation of $n$, in decreasing order with respect to $j$, so that $
n= \sum_{j\leq J} 2^{k_j}.
$
Define
$$
r_0=0, \qquad r_{j }=2^{-k_j}\sum_{\ell < j} 2^{k_\ell}, \quad j=1,\ldots,J ;
$$
it is easy to see that $\{2^{k_j}[r_j,r_{j}+1)\:j=0,\ldots,J-1\}$ are disjoint dyadic intervals which are left children of their dyadic parent and such that the dyadic brother of each contains $n$.
It is shown in \cite[Section 4.1]{ThTR} that
$$
\W_n f (x) = \sum_{j=0}^{J}\sum_{\substack{I \in {\mathcal D}(\mathbb T) \\ |I| = 2^{-k_j }}} \langle f, w_I \rangle w_I, \quad w_I(x)= \frac{\chi_I}{|I|^{\frac12}} W_{r_j}\big(\frac{x}{|I|}\big), \, |I|=2^{-k_j}.
$$
We now use the equality
\begin{equation} \label{pfw2}
 w_I(x)= b_{I} h_{I }(x) W_{n}(x)
\end{equation}
for some $b_I \in \pm 1$ (see \cite[Lemma 2.2]{HL2} for a proof),
to rewrite
$$
\W_n f (x) = W_n(x)\sum_{j=0}^{J}\sum_{{I \in {\mathcal D}(\mathbb T): |I| = 2^{-k_j }}} \langle f W_n,h_I \rangle h_I(x)= W_n(x) T_n f(x) $$
having first noticed  that $b_I$ appears twice, and then having set $\varepsilon_{I,n}=1$ if $|I|=2^{-k_j}$ for some $j$ and zero otherwise in the definition of $T_n$. This shows that $|\W_n f(x)|=|T_n f(x)|$ for each $x\in \mathbb T,n \in \mathbb N$, whence the equality \eqref{modelsum2}.
\end{proof}

\section{Weak $L^p$ from Orlicz type bounds} \label{sectorlicz}
Let $\Phi$ be a Young function, that is, $\Phi:[0,\infty)\to [0,\infty)$, $\Phi$ is continuous,
convex, increasing, $\Phi(0)=0$ and $\Phi(t)\to \infty$ as $t\to \infty$. Define the mean Luxemburg norm of $f$ on a cube $Q\subset {\mathbb R}^n$ by
$$\|f\|_{\Phi,Q}=\inf\left\{\la>0:\frac{1}{|Q|}\int_Q\Phi\left(\frac{|f(x)|}{{\la}}\right)dx\le 1\right\}.$$

\begin{prop}\label{orl} Let $T^{\mathcal F}$ be a maximally modulated Calder\'on-Zygmund operator
satisfying
\begin{equation}\label{cond2}
\|T^{\mathcal F}(f\chi_Q)\|_{L^{1,\infty}(Q)}\lesssim |Q|\|f\|_{\Phi,Q}
\end{equation}
for each cube $Q \subset \mathbb R^n$,
where the Young function $\Phi$ is  such that
$$\g_{\Phi}(p)=\sup_{t\ge 1}\frac{\Phi(t)}{t^{p'}}<\infty$$
for any $p>1$.
Then \eqref{cond1} (and consequently Theorem \ref{mainr}) hold with $\psi(t)=\g_{\Phi}(t)$.
\end{prop}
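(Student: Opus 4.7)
The plan is to verify the \emph{a priori} hypothesis \eqref{cond1} with $\psi=\gamma_\Phi$ and then invoke Theorem \ref{mainr} to conclude. The argument is a straightforward adaptation of Lemma \ref{oscest} combined with Lemma \ref{locsh}. First, I would rerun the proof of Lemma \ref{oscest} verbatim, with the sole modification that the local piece $\bigl(T^{\mathcal F}(f\chi_{\bar Q})\chi_Q\bigr)^*(\lambda|Q|)$ is controlled by $\lesssim \|f\|_{\Phi,\bar Q}$ via the Orlicz hypothesis \eqref{cond2} and the definition of the weak-$L^1$ quasinorm, rather than by the $L^r$-average bound produced by \eqref{cond1}. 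Since the kernel-smoothness tail is unchanged, taking the pointwise supremum over $Q\ni x$ then yields
$$M_\lambda^\#(T^{\mathcal F}f)(x)\lesssim M_\Phi f(x)+Mf(x),\qquad M_\Phi f(x):=\sup_{Q\ni x}\|f\|_{\Phi,Q}.$$

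Next, I would estimate each of the two pieces in $L^{p,\infty}$ for $1<p\le 2$. The assumption $\gamma_\Phi(p')<\infty$ is equivalent to the majorization $\Phi(t)\le \Phi(1)+\gamma_\Phi(p')\,t^{p}$ for all $t\ge 0$; after normalizing so that $\Phi(1)\le 1/2$, a routine computation on the Luxemburg integral gives
$$\|f\|_{\Phi,Q}\le (2\gamma_\Phi(p'))^{1/p}\Big(\tfrac{1}{|Q|}\textstyle\int_Q|f|^p\Big)^{1/p},$$
and consequently $M_\Phi f\lesssim \gamma_\Phi(p')^{1/p}\,M_p f$, with $M_pf=(M|f|^p)^{1/p}$. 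Applying the weak-$(1,1)$ inequality of $M$ to $|f|^p$ gives $\|M_pf\|_{L^{p,\infty}}^{p}=\|M|f|^p\|_{L^{1,\infty}}\lesssim \|f\|_p^{p}$, so $\|M_\Phi f\|_{L^{p,\infty}}\lesssim \gamma_\Phi(p')^{1/p}\|f\|_p$. For the remaining piece, a Marcinkiewicz-type truncation of $f$ at level $\lambda/2$ yields the weak-$(p,p)$ bound $\|Mf\|_{L^{p,\infty}}\lesssim\|f\|_p$ with constant uniform in $p\in(1,\infty)$.

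Combining these estimates with Lemma \ref{locsh} gives
$$\|T^{\mathcal F}f\|_{L^{p,\infty}}\le 3p\,\|M_\lambda^\#(T^{\mathcal F}f)\|_{L^{p,\infty}}\lesssim p\bigl(\gamma_\Phi(p')^{1/p}+1\bigr)\|f\|_p\lesssim \gamma_\Phi(p')\|f\|_p,$$
where the last step uses $p\le 2$ together with $\gamma_\Phi(p')\ge 1$ (automatic after normalization). The principal technical subtlety is that the kernel-smoothness tail naturally produces the Hardy--Littlewood maximal operator, and its \emph{strong} $(p,p)$ bound carries a spurious $p'$ factor which is not absorbable by $\gamma_\Phi(p')$ in general; this is avoided by resorting to the uniform-in-$p$ weak-$(p,p)$ bound of $M$. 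An equivalent workaround is to upgrade the tail estimate via Jensen's inequality to $\tfrac{1}{|2^mQ|}\int_{2^mQ}|f|\lesssim \|f\|_{\Phi,2^mQ}\le M_\Phi f(x)$, folding the tail directly into $M_\Phi f$ and bypassing $Mf$ altogether.
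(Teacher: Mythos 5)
Your argument is correct and follows essentially the same route as the paper: modify the local mean oscillation estimate so that the local term is controlled by $\|f\|_{\Phi,\bar Q}$, deduce $M_{\la}^{\#}(T^{\mathcal F}f)\lesssim M_{\Phi}f+Mf$, bound $M_{\Phi}f\lesssim \g_{\Phi}(p')^{1/p}M_pf$ pointwise via the Luxemburg-norm computation and the weak $(p,p)$ bound for $M_p$, and conclude with Lemma \ref{locsh} using $p\le 2$. Your second workaround for the tail term (absorbing $Mf$ into $M_{\Phi}f$ by Jensen) is exactly what the paper does; your first (the $p$-uniform weak $(p,p)$ bound for $M$) is an equally valid alternative.
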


Before the proof, we apply the Proposition to maximally modulated Calder\'on-Zygmund operators satisfying \begin{equation}\label{restr}
\|T^{\mathcal F}(\chi_E)\|_{L^{p,\infty}}\lesssim (p')^m|E|^{1/p},
\qquad(1<p\le 2)
\end{equation}
for some $m\geq 1$. Exploiting the kernel structure of  $T^{\mathcal F}$ along the lines of Antonov's lemma \cite{A}, it is shown in \cite{GMS} that
  \eqref{cond2} holds  with $\Phi=\Phi_m$ given by $\Phi_m(t)=t(\log({\rm e}+t) )^m\log\log\log ({\rm e}^{{\rm e}^{\rm e}}+t)$. An easy computation then yields
$
\gamma_{\Phi_m}(p)= c_m p^m\log\log ( {{\rm e}^{\rm e}}+p);
$
in particular, recalling that $T^{\mathcal F}=\mathcal C$ falls under the case $m=1$, we obtain \eqref{wbcarl}.

We remark that, for $T^{\mathcal F}=\mathcal C_{lac}$, the best one can take in \eqref{cond2}  \cite{DP}  is $$\Phi(t)=t\log\log({{\rm e}^{\rm e}}+t))\log\log\log\log\big( {\rm e}^{{\rm e}^{{\rm e}^{\rm e}}}+t\big);$$  the above method then yields $\gamma_{\Phi}(p')= c \log( {\rm e}+p')\log\log\log({\rm e}^{{\rm e}^{\rm e}}+p')$, a worse dependence than \eqref{wbcarllac}. To recover \eqref{wbcarllac} via this approach, \eqref{cond2}  with $\Phi(t)=t\log\log({{\rm e}^{\rm e}}+t) $, which is the (sharp) Orlicz function conjectured by Konyagin in \cite{K2} for lacunary Fourier series, is needed.

\subsection*{Proof of Proposition \ref{orl}} We begin with a weak-$L^p$ bound for the Orlicz maximal function
$$
M_\Phi f(x) := \sup_{Q \ni x} \|f\|_{\Phi,Q}.
$$
\begin{lemma}\label{weaklp} For any $p>1$,
\begin{equation}\label{weakmphi}
\|M_{\Phi}f\|_{ {p,\infty}}\lesssim \g_{\Phi}(p')^{1/p}\|f\|_{ p}.
\end{equation}
\end{lemma}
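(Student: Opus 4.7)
The plan is to convert the weak-$L^p$ bound for $M_\Phi$ into a weak-$(1,1)$ statement for the Hardy--Littlewood maximal function $M$ applied to the composition $\Phi(|f|/\lambda)$, after first truncating $f$ at a level comparable to $\lambda$. The truncation is essential: a direct application of weak-$(1,1)$ of $M$ to $\Phi(|f|/\lambda)$ would require $\Phi(|f|/\lambda)\in L^1$, which generally fails when $f\in L^p\setminus L^1$, because $\Phi(t)\geq ct$ near infinity forces the small-$f$ tails to be controlled.

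First, I would fix $\lambda>0$ and decompose $f=f^\lambda+f_\lambda$ with $f_\lambda=f\chi_{|f|>\lambda/2}$. The trivial pointwise bound $\|g\|_{\Phi,Q}\leq \|g\|_\infty/\Phi^{-1}(1)$ (immediate by plugging $\mu=\|g\|_\infty/\Phi^{-1}(1)$ into the Luxemburg definition) gives $M_\Phi f^\lambda \leq \lambda/(2\Phi^{-1}(1))$ pointwise. Under the normalization $\Phi^{-1}(1)\geq 1$, this is bounded by $\lambda/2$, so $\{M_\Phi f>\lambda\}\subseteq \{M_\Phi f_\lambda>\lambda/2\}$.

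Next, monotonicity of $\lambda\mapsto |Q|^{-1}\int_Q\Phi(|f_\lambda|/\lambda)\,dx$ in the scaling parameter, together with the Luxemburg definition, makes $\|f_\lambda\|_{\Phi,Q}>\lambda/2$ equivalent to $|Q|^{-1}\int_Q \Phi(2|f_\lambda|/\lambda)\,dx >1$. Thus $\{M_\Phi f_\lambda>\lambda/2\}\subseteq\{M[\Phi(2|f_\lambda|/\lambda)]>1\}$, and weak-$(1,1)$ of $M$ yields
\[
|\{M_\Phi f>\lambda\}|\lesssim \int_{\mathbb R^n}\Phi(2|f_\lambda|/\lambda)\,dx.
\]
Since the integrand is supported on $\{|f|>\lambda/2\}$, the argument $2|f_\lambda|/\lambda$ is $\geq 1$ throughout the support, so the very definition of $\gamma_\Phi(p')$ as $\sup_{t\geq 1}\Phi(t)/t^p$ gives $\Phi(2|f_\lambda|/\lambda)\leq \gamma_\Phi(p')(2|f_\lambda|/\lambda)^p$. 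Chaining, $|\{M_\Phi f>\lambda\}|\lesssim \gamma_\Phi(p')2^p\lambda^{-p}\|f\|_p^p$, which is the desired weak bound after taking the $p$-th root and supremum in $\lambda$.

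I expect the only genuine obstacle to lie in the case $\Phi^{-1}(1)<1$, where the cutoff at $\lambda/2$ no longer guarantees $M_\Phi f^\lambda\leq \lambda/2$. The natural fix is to truncate at $\tau=\lambda \Phi^{-1}(1)/2$, which restores the $L^\infty$ control of $M_\Phi f^\lambda$ but leaves an intermediate range $\Phi^{-1}(1)\leq 2|f_\lambda|/\lambda<1$ on which the polynomial growth bound is unavailable. On this range I plan to use the convexity-monotonicity bound $\Phi(t)\leq \Phi(1) t$ valid for $t\leq 1$ (since $\Phi(t)/t$ is nondecreasing), combined with Chebyshev's inequality $|\{|f|>\tau\}|\leq \tau^{-p}\|f\|_p^p$, to absorb the intermediate contribution. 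Alternatively, a cleaner route reduces the general case to the normalized one via the rescaling $\tilde\Phi(t)=\Phi(\Phi^{-1}(1)\,t)$, which satisfies $\tilde\Phi(1)=\tilde\Phi^{-1}(1)=1$ and the scaling identities $M_{\tilde\Phi}f=\Phi^{-1}(1)\,M_\Phi f$; checking that this bookkeeping reproduces the exponent $\gamma_\Phi(p')^{1/p}$ in the final constant is where most care is needed.
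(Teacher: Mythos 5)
Your argument is correct, but it follows a genuinely different route from the paper. The paper proves the \emph{pointwise} estimate $M_{\Phi}f(x)\lesssim \g_{\Phi}(p')^{1/p}M_pf(x)$: for a fixed cube it tests the Luxemburg norm against $\la_0=\big(2\xi_{\Phi}(p)\,\frac{1}{|Q|}\int_Q|f|^p\big)^{1/p}$ with $\xi_{\Phi}(p)=\sup_{t\ge\Phi^{-1}(1/2)}\Phi(t)/t^p$, splits the integral at the level $\Phi^{-1}(1/2)\la_0$ (so the low part contributes at most $1/2$ by monotonicity of $\Phi$, with no normalization hypothesis needed), compares $\xi_\Phi(p)^{1/p}$ with $\g_\Phi(p')^{1/p}$, and then concludes from the weak $(p,p)$ bound for $M_p$. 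You instead argue at the level of distribution functions: truncate $f$ at height comparable to $\la$, control $M_\Phi$ of the bounded piece by $\la/2$ via $\|g\|_{\Phi,Q}\le\|g\|_\infty/\Phi^{-1}(1)$, and reduce the tall piece to the weak $(1,1)$ inequality for $M$ applied to $\Phi(2|f_\la|/\la)$, using $\Phi(t)\le\g_\Phi(p')t^p$ for $t\ge1$. This is a classical Orlicz-maximal truncation argument; it is more elementary (only weak $(1,1)$ of $M$ is used) and the constants it produces are still uniform in $p$ (the $2^p$ and the weak $(1,1)$ constant disappear after the $p$-th root), which is what matters for the paper's application. The price is exactly the normalization issue you flag: in the cases of interest $\Phi(1)>1$, i.e.\ $\Phi^{-1}(1)<1$, so your fix is not a corner case but is actually needed; both remedies you sketch do work --- truncating at $\tau=\la\Phi^{-1}(1)/2$ and absorbing the intermediate range $\Phi^{-1}(1)\le 2|f|/\la<1$ via $\Phi(t)\le\Phi(1)$ together with Chebyshev, noting $\Phi(1)\le\g_\Phi(p')$ so the extra term is harmless, or rescaling to $\tilde\Phi(t)=\Phi(\Phi^{-1}(1)t)$, whose bookkeeping only costs a factor depending on $\Phi$. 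The paper's pointwise domination is slightly stronger and more reusable (it transfers to any space where $M_p$ is bounded, weighted or not, and gives strong-type bounds for free), while your version goes straight to the weak-type inequality and avoids the comparison between $\xi_\Phi(p)$ and $\g_\Phi(p')$.
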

\begin{proof} Let us show that
\begin{equation}\label{pointmphi}
M_{\Phi}f(x)\lesssim \g_{\Phi}(p')^{1/p}M_pf(x).
\end{equation}
Observe that (\ref{weakmphi}) follows immediately from this estimate by the standard weak-type $(p,p)$ property of $M_p$.
Fix a cube $Q$. Set
$$\xi_{\Phi}(p)=\sup_{t\ge \Phi^{-1}(1/2)}\frac{\Phi(t)}{t^p}$$
and
$$\la_0=\Big(2\xi_{\Phi}(p)\frac{1}{|Q|}\int_Q|f|^pdx\Big)^{1/p}.$$
Then
\begin{eqnarray*}
\frac{1}{|Q|}\int_Q\Phi\left(\frac{|f|}{\la_0}\right)dx&=&
\frac{1}{|Q|}\int_{Q\cap\{|f|< \Phi^{-1}(1/2)\la_0\}}\Phi\left(\frac{|f|}{\la_0}\right)dx\\
&+&\frac{1}{|Q|}\int_{Q\cap\{|f|\ge\Phi^{-1}(1/2)\la_0\}}\Phi\left(\frac{|f|}{\la_0}\right)dx\\
&\le& \frac{1}{2}+\xi_{\Phi}(p)
\frac{1}{|Q|}\int_Q\left(\frac{|f|}{\la_0}\right)^pdx=1.
\end{eqnarray*}
Therefore,
$$\|f\|_{\Phi,Q}\le \Big(2\xi_{\Phi}(p)\frac{1}{|Q|}\int_Q|f|^pdx\Big)^{1/p}.$$
Further,
\begin{eqnarray*}
\xi_{\Phi}(p)^{1/p}&=&\max\Big(
\sup_{\Phi^{-1}(1/2)\le t\le 1}\frac{\Phi(t)^{1/p}}{t},\g_{\Phi}(p')^{1/p}\Big)\\
&\le& \max(c(\Phi),\g_{\Phi}(p')^{1/p})\lesssim \g_{\Phi}(p')^{1/p}.
\end{eqnarray*}
Combining this estimate with the previous one proves (\ref{pointmphi}), and completes the proof.
\end{proof}
The main argument for   Proposition \ref{orl} begins now.
Replacing \eqref{cond1} by (\ref{cond2}) and repeating the proof of Lemma \ref{oscest}, we obtain
$$
\o_{\la}(T^{\mathcal F}f;Q)\lesssim \|f\|_{\Phi,\bar Q}+\sum_{m=0}^{\infty}\frac{1}{2^{m\d}}\left(\frac{1}{|2^mQ|}\int_{2^mQ}|f|\right),
$$
which implies
$$M_{\la}^{\#}(T^{\mathcal F}f)(x)\lesssim M_{\Phi}f(x)+Mf(x)\lesssim M_{\Phi}f(x).$$
Hence, combining Lemmata \ref{locshest} and \ref{weaklp}  yields
$$
\|T^{\mathcal F}f\|_{ {p,\infty}}\lesssim \|M_{\la}^{\#}(T^{\mathcal F}f)\|_{L^{p,\infty}}\lesssim \|M_{\Phi}f\|_{ {p,\infty}}
\lesssim \g_{\Phi}(p')^{1/p}\|f\|_{ p}
$$
which is the claim of the Proposition.


\section{Remarks and complements} \label{sect6}
\subsection{On the sharpness of the $A_p$ bounds for $\|{\mathcal C}\|_{L^p(w)}$, $\|{\mathcal C}_{lac}\|_{L^p(w)}$}
 Let $\a_p$ be the best possible exponent in
\begin{equation}\label{bestp}
\|{\mathcal C}\|_{L^p(w)}\lesssim [w]_{A_p}^{\a_p}.
\end{equation}
We read from Corollary \ref{carl} that, up to the $\log\log $ term,
\begin{equation}\label{bestap}
\a_p\le \max\big(p',\frac{2}{p-1}\big).
\end{equation} Further, combining Sj\"olin's bound $\mathcal C: L (\log L)^2 \to L^1$ \cite{SJ} with the converse of Yano's extrapolation theorem of \cite{TaoYano} produces the strong-type estimate \begin{equation}\|{\mathcal C}\|_{L^p}\lesssim p (p')^2, \qquad (1<p<\infty), \label{strongboundlie} \end{equation} which is the best possible asymptotics for $p>2$; up to the $\log\log$ term, this is also obtained  in Corollary \ref{carl} (i).
In heuristic accordance with the ``$L\log L$ conjecture''  mentioned in  \eqref{theconjemb}, it is likely that the dependence in \eqref{strongboundlie}  is  best possible also for $1<p\le2$. Assuming this,
 it is easy to show that the bound \eqref{bestap} for $\a_p$ is sharp, up to the doubly logarithmic term, at least for $1<p\le 2$; incidentally, note that  $\log\log({\rm e}^{\rm e} +[w]_{A_p})$ would drop if one could strengthen the weak-$L^p$ bound \eqref{wbcarl} to
 $$
 \|{\mathcal C } f\|_{ {p,\infty}} \lesssim p'\|f\|_{p}, \qquad (1<p<2),
 $$
which is the same dependence as in \eqref{cond1walsh} for  the model Walsh-Carleson operator $\W$.

Indeed, a well-known argument  by Fefferman-Pipher \cite{FP} (see also \cite{LPR} for an extension) says that if a sublinear operator $T$ satisfies $\|T\|_{L^{p_0}(w)}\lesssim N([w]_{A_1})$ for some $p_0$ and an increasing function $N$, then $\|T\|_{L^r}\lesssim N(cr)$ as $r\to \infty$. Hence, on one hand, since $\|{\mathcal C}\|_{L^r}\simeq r$ as $r\to \infty$,
we obtain that $\a_p\ge 1$ for all $p>1$. On the other hand, let
${\mathcal C}_{\xi(\cdot)}$ be a linearization of ${\mathcal C}$ as in Remark~\ref{rem5}.
Then, by duality and  ~(\ref{bestp}),
$$\|{\mathcal C}_{\xi(\cdot)}^*\|_{L^{p'}(w)}=\|{\mathcal C}_{\xi(\cdot)}\|_{L^{p}(w^{-(p-1)})}\lesssim [w^{-(p-1)}]_{A_{p}}^{\a_p}=[w]_{A_{p'}}^{\a_p(p-1)},$$
and hence $\|{\mathcal C}_{\xi(\cdot)}^*\|_{L^r}\lesssim r^{\a_p(p-1)}$ as $r\to \infty$, which implies
$$\|{\mathcal C}\|_{L^r}\lesssim \frac{1}{(r-1)^{\a_p(p-1)}}$$
as $r\to 1$. Comparing with  \eqref{strongboundlie}, which we have assumed to be sharp, we obtain $\a_p\ge \frac{2}{p-1}$.
Therefore, for all $p>1$,
$$\max\Big(1,\frac{2}{p-1}\Big)\le \a_p\le \max\Big(p',\frac{2}{p-1}\Big).$$
In particular, $\a_p=\frac{2}{p-1}$ for $1<p\le 2$.

Similarly, suppose $\phi_p$ is the best possible function in
$$
\|{\mathcal C}_{lac}\|_{L^p(w)}\lesssim \phi_p([w]_{A_p}).
$$
Then, arguing as above, we obtain that if the unweighted bound for $\|{\mathcal C}_{lac}\|_{L^p}$ in Corollary \ref{carllac} is best possible, then
$$\max(t,t^{\frac{1}{p-1}}\log  ({\rm e}+t))\lesssim \phi(t)\lesssim t^{\max(1,\frac{1}{p-1})}\log  ( {\rm e}+t)\quad(t\ge 1).$$
In particular, $\phi_p(t)=t^{\frac{1}{p-1}}\log ({\rm e}+ t)$ for $1<p\le 2$.

\subsection{On mixed $A_p$-$A_{\infty}$ bounds}
Following recent works, where the $A_p$ bounds were improved by mixed $A_p$-$A_{\infty}$ bounds (see, e.g., \cite{HL,HP,HPR}),
one can obtain similar results for $T^{\mathcal F}$.

Given a weight $w$, define its $A_{\infty}$ constant by
$$[w]_{A_{\infty}}=\sup_{Q}\frac{1}{w(Q)}\int_{Q}M(w\chi_Q)dx.$$
It was shown in \cite{HP} that part (i) of Proposition \ref{sum} holds with the $[w]_{A_1}$ constant replaced by
$[w]_{A_{\infty}}$. Changing only this point in the proof of Theorem \ref{mainr}, part (i), we get
that for any $w\in A_1$ and for all $p>1$,
$$\|T^{\mathcal F}f\|_{L^p(w)}\le c(n,T)pp'\psi(3p')[w]_{A_1}^{\frac{1}{p}}[w]_{A_{\infty}}^{\frac{1}{p'}}\|f\|_{L^p(w)}.$$
For Calder\'on-Zygmund operators this inequality was obtained in \cite{HP}.

Further, it was shown in \cite{HPR} that the property (\ref{bhpr}) holds with $\e=\frac{c(n)}{[\si]_{A_{\infty}}}$,
where, as usual, $\si=w^{-\frac{1}{p-1}}$. Also, observe that the operator ${\mathcal T}_{\mathcal S}$ defined in Section 4 satisfies (see \cite{HL})
$$\|{\mathcal T}_{\mathcal S}\|_{L^p(w)}\lesssim [w]_{A_p}^{\frac{1}{p}}\big([w]_{A_{\infty}}^{\frac{1}{p'}}+[\si]_{A_{\infty}}^{\frac{1}{p}}\big).$$
Changing the corresponding estimates in the proof of Theorem \ref{mainr}, part (ii), we get that for any $p>1$,
$$\|T^{\mathcal F}f\|_{L^p(w)}\le c(n,T^{\mathcal F},p)\psi\Big(c(p,n)[\si]_{A_{\infty}}\Big)[w]_{A_p}^{\frac{1}{p}}\big([w]_{A_{\infty}}^{\frac{1}{p'}}+[\si]_{A_{\infty}}^{\frac{1}{p}}\big).$$

\end{document}